\documentclass[11pt]{article}

	\usepackage{makecell}
	\usepackage[margin=1in]{geometry}  
	\usepackage{graphicx}              
	\usepackage{amsmath}               
	\usepackage{amsfonts}              
	\usepackage{amsthm}                
	\usepackage{mathtools}
	\usepackage{xcolor}
	\usepackage{hyperref}
	\usepackage{MnSymbol}
	\newtheorem{thm}{Theorem}[section]
	\newtheorem{lem}[thm]{Lemma}

	\newtheorem{defn}{Definition}
	
	\newtheorem{remark}{Remark}

\begin{document}

		\nocite{*}
		
		\title{ On The Ellipticity of Generalised Monge-Amp\`ere Equations on Vector Bundles}
		
		\author{Gao Chen and Kartick Ghosh} 
	
	\maketitle

	\begin{abstract}
		In this paper, we study the ellipticity of the vector bundle versions of the Monge-Amp\`ere, $J$, dHYM and $\sigma_{k}$-equations at a point. These are nonlinear geometric partial differential equations defined on a holomorphic vector bundle over a compact K\"ahler manifold. We show that when both the dimension of the manifold and the rank of the bundle are greater than or equal to three, these equations do not preserve ellipticity along continuity paths in the connected component of the trivial solution. However, the $\sigma_{2}$-equation does preserve ellipticity along continuity paths.
	\end{abstract}
	
	\section{Introduction}
    One of the fundamental theorems in geometric analysis is the Calabi-Yau theorem \cite{YauCalabiconjecture}. It states that on any compact K\"ahler manifold $X$ with a K\"ahler form $\omega_0$, for any smooth function $f$, there exists a function $\varphi$ such that $\omega_\varphi=\omega_0+\sqrt{-1}\partial\bar\partial\varphi$ satisfies $\omega_\varphi^n=c e^f\omega_0^n$ where $c=\frac{\int_X \omega_0^n}{\int_X e^f\omega_0^n}$. Yau's proof is based on the continuity method. He considered $\omega_t^n=c_t e^{tf}\omega_0^n$ where $\omega_t=\omega_0+\sqrt{-1}\partial\bar\partial\varphi_t$, and $c_t=\frac{\int_X \omega_0^n}{\int_X e^{tf}\omega_0^n}$. The key steps are as follows. Let $I$ be the set of $t$ such that $\varphi_t$ solves the equations. Suppose that $t_i\in I$ converges to $t_0$. Yau proved an a priori estimate  $||\varphi_{t_i}||_{C^{k,\alpha}}\le C(k,\alpha)<\infty$. This implies that $\omega_{t_i}$ has a subsequence converging to $\omega_{t_0}$. A priori, $\omega_{t_0}$ is only the limit of K\"ahler metrics and therefore may fail to be K\"ahler. However, the equation forces $\omega_{t_0}$ to remain K\"ahler. We say that the K\"ahler condition is preserved along the continuity path. This is essential for proving the closedness of $I$. Moreover, the K\"ahler condition implies that the equation is elliptic which is essential for proving the openness of $I$. 
\par 
	Another fundamental theorem in geometric analysis is the Donaldson-Uhlenbeck-Yau theorem (\cite{Donaldson1},\cite{Donaldson2},\cite{Uhlenbeck1986}). It states that a holomorphic vector bundle over a compact K\"ahler manifold admits a Hermitian-Einstein metric if and only if the bundle is slope polystable. This theorem can be seen as a bridge between differential geometry and algebraic geometry, with the equation originating in mathematical physics. Consequently, it has inspired numerous research directions in differential geometry, algebraic geometry, and mathematical physics. \par
    Uhlenbeck and Yau's proof was based on the continuity method while Donaldson's proof was based on geometric flow. There is also a geometric flow approach to Calabi's conjecture due to Cao \cite{Caodeformation}. He proved that when $c_1(X)=0$, the Ricci flow starting from $\omega_0$ converges to the Calabi-Yau metric. A key part of the proof is that the K\"ahler condition is preserved along the flow. Moreover, the K\"ahler condition implies the parabolicity of the flow. \par
	Due to the great success of the Calabi-Yau theorem and the Donaldson-Uhlenbeck-Yau theorem, many new equations have recently been proposed  (\cite{Pingaliadv},\cite{collins2018},\cite{Takahashi2024},\cite{Dervan2024},\cite{Zhang2022},\cite{Tipleretal}), some of which may be related to Bridgeland's categorical stability conditions (\cite{Bridgeland}). In fact, the stability conditions for the $Z$-critical equations introduced by Dervan-McCarthy-Sektnan (\cite{Dervan2024}) are expected to be Bridgeland stability conditions. Near the large-volume limit for the $Z$-critical equations (see theorem $1.1$ in \cite{Dervan2024}), a Donaldson-Uhlenbeck-Yau theorem has been proposed and proved. More generally, suppose that $(h_0,\bar{\partial}_{0})$ solves the equation and satifies some positivity condition, then the Donaldson-Uhlenbeck-Yau theorem also holds near $(h_0,\bar{\partial}_{0})$  (\cite{delloque2025}, \cite{Scarpa}). One reason is that the ellipticity condition is an open condition and the posititiy of $h_0$ implies the ellipticity of $h_0$. \par 
     Due to the success of these results to solve for $h$ near $h_0$, it is natural to ask what happens if $h$ is away from $h_0$. In this case, we can no longer use the ellipiticity of $h_0$, and the openness of the ellipticity condition. A natural idea is to use either the continuity method or the flow method. To use the continuity method, we should expect a positivity condition $K$, which is analogous to the K\"ahler condition in Yau's solution of the Calabi conjecture. The desired property is that if $h_{t_i}$ solves the equation and satisfies condition $K$, then its limit also satisfies the condition $K$. Moreover, the condition $K$ should imply ellipticity. To use the flow method, one would similarly expect a positivity condition $K$  that is preserved along the flow and implies parabolicity. \par
  
     Note that the condition $K$ is expected to be a pointwise condition, so we restrict ourselves to a single point. The theorems in this paper show that when the dimension and the rank are at least three, both the continuity method and the flow method fail for the vector bundle Monge-Amp\`ere equation, the $\sigma_{k}$ ($k\ge 3$) equations, the vector bundle $J$-equation and the deformed Hermitian-Yang-Mills equation near the small-volume limit. More precisely, we prove that there exists a continuous path of solutions $F_{H_t}, t\in[0,1]$ such that $F_{H_0}$ is the trivial solution $\rho\sum_{i=1}^{n}\mathrm{Id}\sqrt{-1}dz^{i}\wedge d\bar{z}^{i}$ (for some positive real number $\rho$) while $F_{H_1}$ does not satisfy the ellipticity condition. Indeed, if such a condition $K$ existed, then $F_{H_{t}}$ would satisfy condition $K$ for all $t$, which would imply the ellipticity of $F_{H_{1}}$. The significance of the  connected component of trivial solution is also emphasized in \cite{Lin2023} for the line bundle case. In a helpful discussion with Yau, he suggested that we should make an additional assumption that the positivity condition for the deformed Hermitian-Yang-Mills equation holds along a continuity path instead of getting it for free from the equation.\par

	Before stating our main results in details, we introduce the relevant equations. Suppose $(E,H)$ is a holomorphic vector bundle over an $n$-dimensional compact K\"ahler manifold $(X,\omega)$. We denote the curvature of the Chern connection of $H$ on $E$ as $F_{H}$. The vector bundle Monge-Amp\`ere(vbMA) equation (\cite{Pingaliadv}) asks for a Hermitian $H$ such that 
	\[(\sqrt{-1}F_{H})^{n}=\eta\otimes\mathrm{Id},\]
	where $\eta$ is a given volume form. The $\sigma_{k}$ equations (\cite{Zhang2022}) asks for a Hermitian metric $H$ which satisfies 
	\[(\sqrt{-1}F_{H})^{k}\wedge\omega^{n-k}=\eta\otimes\mathrm{Id},\]
	where $\eta$ is a given volume form. The vector bundle $J$-equation (\cite{Takahashi2024}) is defined for bundles with the properties $ch_{n}(E)>0,[\omega]\cdot ch_{n-1}(E)>0$. The vector bundle $J$-equation asks for a metric $H$ such that 
	\[c(\sqrt{-1}F_{H})^{n}-\omega\mathrm{Id}\wedge(\sqrt{-1}F_{H})^{n-1}=0,\]
	for some constant $c=\frac{[\omega]\cdot ch_{n-1}(E)}{nch_{n}(E)}>0$. This equation can be seen as inverse $\sigma_{k}$ equation. The deformed Hermitian-Yang-Mills equation(dHYM) for vector bundles proposed by Collins-Yau (\cite{collins2018}) asks for a metric $H$ such that 
	\[\mathrm{Im}\left(e^{-\sqrt{-1}\hat{\theta}}\big(\omega\otimes\mathrm{Id}_{E}-F_{H}\big)^{n}\right)=0\]
	with 
	\[\int_{X}\mathrm{Tr}\left(\omega\otimes\mathrm{Id}_{E}-F_{H}\right)^{n}\in \mathbb{R}_{>0}e^{\sqrt{-1}\hat{\theta}}\]
	and the imaginary part is defined using the metric $H$. The dHYM equation is an $Z$-critical equation. Hence we study the ellipticity of this equation near the small-volume limit.\par
 Our main results are the following.
	\begin{thm}
		\label{thmvbma}
		The ellipticity is not preserved for the vector bundle Monge-Amp\`ere equation at a point along continuity paths in the connected component of trivial solution when $rank(E)\ge 3$ and $\dim(X)\ge 3$.
	\end{thm}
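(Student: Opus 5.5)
The idea is to reduce the statement to a purely algebraic question at one point $p\in X$ and then write down an explicit one-parameter family of curvature tensors. Fix holomorphic normal coordinates at $p$ and a unitary frame of $E$. The value of $\sqrt{-1}F_H$ at $p$ is a tensor $F=\sum_{i,j}F_{i\bar j}\sqrt{-1}dz^i\wedge d\bar z^j$ with $F_{i\bar j}\in\mathrm{End}(\mathbb{C}^r)$ and $F_{i\bar j}^*=F_{j\bar i}$. Conversely, every such tensor is the curvature at $p$ of some Hermitian metric, for instance $H=\mathrm{Id}-\sum F_{i\bar j}z^i\bar z^j+O(|z|^3)$ cut off and glued to a background metric. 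A continuous path of tensors is realized in the same way by a continuous path $H_t$.

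Linearizing $H\mapsto(\sqrt{-1}F_H)^n$ at $H$ in the direction $H^{-1}\delta H=A$ gives a principal symbol at a covector $\xi\neq0$. It is the linear map
\[
\mathcal{L}_{F,\xi}(A)=\frac{d}{ds}\Big|_{s=0}\big(F+s\,\xi\wedge\bar\xi\,A\big)^n ,
\]
where the wedge product is the ordered, symmetrized product of the $\mathrm{End}(E)$-valued forms. This map sends $H$-Hermitian endomorphisms $A$ to Hermitian top forms. Both spaces have real dimension $r^2$, so ellipticity at $p$ means that $\det\mathcal{L}_{F,\xi}\neq0$ for every $\xi\neq0$. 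Since $\det\mathcal{L}_{F,\xi}$ is a real polynomial in the entries of $F$ and $\xi$, the theorem reduces to the following claim. There is a continuous path $F_t$, $t\in[0,1]$, with $F_0=\rho\sum\mathrm{Id}\sqrt{-1}dz^i\wedge d\bar z^i$ and $F_t^n=\eta_t\,\mathrm{Id}$ for some $\eta_t>0$ (the volume form is ours to choose). Along it, $\det\mathcal{L}_{F_t,\xi}$ changes sign or vanishes for some fixed $\xi$. The intermediate value theorem then gives a $t_*$ with $F_{t_*}$ non-elliptic, and restricting the path to $[0,t_*]$ finishes the proof.

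Diagonal or commuting families cannot work. In that case the equation splits into scalar Monge--Amp\`ere equations on the eigenlines, and positivity is preserved because each eigen-determinant stays nonzero. So the path must use genuinely noncommuting off-diagonal components. I will first treat $n=r=3$ and then pass to general $n,r\ge3$ by a block construction. Put $F=F'\oplus\rho\,\mathrm{Id}_{E''}$ on $E=\mathbb{C}^3\oplus E''$, and add the extra directions $\rho\,\mathrm{Id}\sqrt{-1}dz^k\wedge d\bar z^k$ for $k>3$. Then take $\xi$ supported in the first three coordinates. The symbol becomes block-triangular, and its determinant is a nonzero multiple of the $3\times3$ one. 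I will have to check that the mixed blocks (the $\mathrm{Hom}(\mathbb{C}^3,E'')$ part of $A$) do not spoil this; I expect them to contribute a factor that stays positive along the path.

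For $n=r=3$ I would try the ansatz
\[
F_{1\bar1}=\rho\,\mathrm{Id},\qquad F_{2\bar2}=D_2(t),\qquad F_{3\bar3}=D_3(t),\qquad F_{2\bar3}=tB=F_{3\bar2}^*,
\]
with $B$ a fixed non-normal matrix (for example a nilpotent Jordan block, or a Pauli-type matrix). The Hermitian diagonal blocks $D_2(t),D_3(t)$ are solved for so that $F_t^3$ is a multiple of $\mathrm{Id}$. The top form reduces to $\rho$ times the symmetrized $2\times2$ "determinant" $D_2D_3+D_3D_2-t^2(BB^*+B^*B)$ over directions $2,3$, up to a constant. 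Choosing $D_3=\mathrm{Id}$ and $D_2$ to cancel the non-scalar part makes the equation solvable by an explicit formula. The main obstacle is the next step: computing $\det\mathcal{L}_{F_t,\xi}$ for a well-chosen $\xi$ (likely $\xi=dz^2+c\,dz^3$) and showing it becomes nonpositive before the path leaves the solution set. For this I would expand in $t$, or, failing that, evaluate explicitly at a specific $t$ (possibly with computer-assisted symbolic algebra on the $9\times9$ real matrix) to exhibit the sign change.
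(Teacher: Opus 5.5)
There is a genuine gap: the ansatz you propose for $n=r=3$ never loses ellipticity. The step you flag as the main obstacle is therefore impossible, not merely hard. Your reduction to a pointwise algebraic problem and your intermediate-value logic are fine.

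\textbf{Why your ansatz stays elliptic.} With $F_{1\bar1}=\rho\,\mathrm{Id}$ and $F_{1\bar2}=F_{1\bar3}=0$ you have $F=\rho\,\omega_1\mathrm{Id}+G$, where $\omega_i=\sqrt{-1}dz^i\wedge d\bar z^i$. Here $G$ lives in directions $2,3$ and satisfies $G^2=\kappa\,\mathrm{Id}\,\omega_2\wedge\omega_3$ with $\kappa>0$, $D_3=\mathrm{Id}$ and $D_2=\frac{\kappa}{2}\mathrm{Id}+\frac{t^2}{2}(BB^*+B^*B)$. Use $\omega_1^2=0$, $G^3=0$, and replace $\xi\wedge\bar\xi$ by $\sqrt{-1}\xi\wedge\bar\xi$ (a constant factor). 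Then for $\xi=\sum_i\xi_i dz^i$ and Hermitian $A$,
\[
\mathcal{L}_{F,\xi}(A)=\Big(3\rho\,(AG_v+G_vA)+|\xi_1|^2\big(2\kappa A+D_2A+AD_2-t^2(BAB^*+B^*AB)\big)\Big)\,\omega_1\wedge\omega_2\wedge\omega_3 .
\]
Here $G_v=|\xi_3|^2D_2+|\xi_2|^2\mathrm{Id}-t\,\xi_3\bar\xi_2B-t\,\xi_2\bar\xi_3B^*$ is $G$ evaluated on $v=(\xi_3,-\xi_2)$. Work in a unitary basis where $A=\mathrm{diag}(\lambda_k)$ and $B$ has entries $\beta_{kl}$. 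Then
\[
\mathrm{Tr}\big(A\,\mathcal{L}_{F,\xi}(A)\big)=6\rho\,\mathrm{Tr}(A^2G_v)+|\xi_1|^2\Big(3\kappa\,\mathrm{Tr}(A^2)+t^2\sum_{k,l}|\beta_{kl}|^2(\lambda_k-\lambda_l)^2\Big).
\]
Moreover $G_v$ is positive definite whenever $(\xi_2,\xi_3)\neq0$. Indeed, for $w\neq0$, $w^*G_vw$ is the Hermitian form of $(\xi_3,-\xi_2)$ for the $2\times2$ matrix with entries $w^*D_2w$, $t\,w^*Bw$, $t\,\overline{w^*Bw}$, $|w|^2$. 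Its determinant is at least $\frac{\kappa}{2}|w|^4$, because $|w^*Bw|^2\le\frac12|w|^2(|Bw|^2+|B^*w|^2)$.

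Hence the symbol, which is self-adjoint for the trace pairing, is positive definite for every $\xi\neq0$ and every $t$. So $\det\mathcal{L}_{F_t,\xi}$ never vanishes, whatever $B$ you pick. Your intended $\xi=dz^2+c\,dz^3$ is the worst case. With $\xi_1=0$ the symbol is exactly $3\rho$ times the two-dimensional one, and preservation of ellipticity in dimension two is precisely Pingali's result quoted in the Remark in the introduction. Splitting off a scalar direction reduces everything to dimension two. What is missing is non-commutativity coupling all three directions.

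\textbf{How the paper gets the coupling.} It keeps $A_1,A_2,A_3$ diagonal but non-scalar and uses three elementary off-diagonal blocks $B_{12}=aE_{12}$, $B_{13}=bE_{13}$, $B_{23}=cE_{23}$ (the paper's $a,b,c$). It tests with the rank-one $\xi=dz^2\otimes E_{11}$, whose condition-$E$ value is $2(3px-b^2)$. The $(1,1)$ entry of $(\sqrt{-1}F)^3$ is $6psx-b^2(2s+v)-a^2(2x+y)+2abc$. Letting the $(3,3)$ entry $v$ of $A_2$ drop to $0.1$ and using the cubic term $2abc$ keeps this entry positive when $b^2$ reaches $3px$ at $t=1$. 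The remaining entries $u,z$ are then solved explicitly. The coupling is essential even there: with $B_{13}$ alone and positive diagonal entries, $b^2=3px$ would force that entry to be $-3pxv<0$.

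\textbf{Quadratic form instead of determinant.} The paper works with the quadratic form rather than $\det$. The symbol is positive definite at $t=0$, and one Hermitian test endomorphism with value $\le0$ at $t=1$ forces an eigenvalue through zero. This avoids your $9\times9$ determinant, which could miss an even number of eigenvalue crossings. It also makes the passage to higher rank and dimension a rescaling of the test value by $(k+3)!/3$. Your block-triangular factorization, which you have not verified, is then unnecessary.
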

	\begin{remark}
		Pingali showed in \cite{Pingaliadv} that the ellipticity is preseved for the vector bundle Monge-Amp\`ere equation along continuity paths when $dim(X)=2$ (see lemma $2.5$ in \cite{Pingaliadv} and the discussion thereafter). As all the other equations are equivalent to the vector bundle Monge-Amp\`ere equation in dimension two by completing squares, they also preserve ellipticity in dimension two along continuity paths.
	\end{remark}
		\begin{thm}
			\label{thmsigma}
		The $\sigma_{k}$ equations do not preserve ellipticity at a point along continuity paths in the connected component of trivial solution when $k\ge 3$ and $rank(E)\ge 3$. In contrast, the $\sigma_{2}$ equation preserves ellipticity along continuity paths.  
	\end{thm}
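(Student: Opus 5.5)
Throughout, I work at a single point and choose coordinates with $\omega=\sqrt{-1}\sum_i dz^i\wedge d\bar z^i$. Write $\sqrt{-1}F_H=\sqrt{-1}\sum_{i,j}F_{i\bar j}\,dz^i\wedge d\bar z^j$, where the $F_{i\bar j}$ are $r\times r$ matrices with $F_{j\bar i}=F_{i\bar j}^{*}$ in an $H$-unitary frame. The pointwise equation then says that the symmetrized expression $P_k(F)$, representing $(\sqrt{-1}F)^k\wedge\omega^{n-k}/\omega^n$, is a positive multiple of $\mathrm{Id}$. The multiple can be absorbed into $\eta$, since $\eta$ is arbitrary at a point.

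To get the principal symbol, I vary $H$ by $H(\mathrm{Id}+\epsilon A)$ with $A$ Hermitian. The highest-order part of $\delta F$ is $\partial\bar\partial A$, so the symbol at a covector $\xi$ is the linear map $L_\xi(A)=\frac{d}{d\epsilon}P_k(F+\epsilon\,\xi\otimes\bar\xi\,A)$. This is the sum of all words in which $A$ occupies one slot, paired with $\xi\otimes\bar\xi$, and the remaining $k-1$ slots are filled with components of $F$, contracted against $\omega^{n-k}$. By unitary invariance I may take $\xi=e_1$. Ellipticity at $F$ means that $L_\xi$ is invertible on Hermitian endomorphisms for every $\xi\neq 0$. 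At the trivial solution $F=\rho\,\mathrm{Id}\otimes\omega$ the map $L_\xi$ is a positive multiple of the identity.

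\textbf{The case $k\ge 3$.} Here I would construct an explicit path $F_t$, $t\in[0,1]$, with $F_0=\rho\,\mathrm{Id}\otimes\omega$, with $P_k(F_t)$ a positive scalar for every $t$, and with $L_{e_1}$ singular at $t=1$.

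Natural ansatz. Put $F_{i\bar i}=\rho\,\mathrm{Id}$ for all $i$. Put $F_{i\bar j}=tB_{ij}$ for $i\neq j$ with $i,j\in\{1,2,3\}$, and set all other entries to zero. The matrices $B_{ij}$ are chosen as non-commuting $3\times 3$ matrices, for example built from a rank-three representation such as $\mathfrak{so}(3)$ generators or cyclic permutation matrices. The rotational symmetry of this choice should force $P_k(F_t)=c(t)\,\mathrm{Id}$. This is exactly where both $n\ge 3$ and $r\ge 3$ are needed: one needs three directions to build cyclic words such as $F_{1\bar 2}F_{2\bar 3}F_{3\bar 1}$, and enough room in $E$ for them to be non-scalar.

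Checking the path:
\begin{itemize}
\item Verify that $c(t)>0$ on $[0,1]$, which is a polynomial computation in $t$.
\item Compute $L_{e_1}(A)$ for a carefully chosen test $A$, for instance an eigenvector of the induced action. The aim is to show that some eigenvalue of $L_{e_1}$ is a polynomial in $t$ that is positive at $0$ and changes sign before $t=1$.
\item Conclude by the intermediate value theorem that ellipticity fails at some $t_*$, and truncate the path there.
\end{itemize}

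\textbf{The case $k=2$.} The symbol becomes an anticommutator: $L_{e_1}(A)=G A+A G$ with $G=G_{e_1}=\sum_{i\ge2}F_{i\bar i}$ (up to a positive constant). Its eigenvalues are $\lambda_a+\lambda_b$ with $\lambda_a$ the eigenvalues of $G$. The condition $K$ I would propose is that $G_\xi>0$ for all unit $\xi$. This condition implies ellipticity, and it is an open condition. For closedness along a path of solutions, suppose $G_\xi\ge 0$ with $G_\xi v=0$ for some unit $v$. Then I would evaluate $\langle P_2(F)v,v\rangle$ and write it as a sum in which the terms $\langle F_{i\bar i}F_{j\bar j}v,v\rangle$ involving $G$ vanish. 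The remaining terms should be of the form $-\sum|F_{i\bar j}v|^2$ or be controlled by them. This would give $\langle P_2(F)v,v\rangle\le 0$, contradicting $\eta>0$. Hence $K$ is preserved, and since it holds at the trivial solution, ellipticity is preserved along continuity paths.

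\textbf{Main obstacle.} I expect the main obstacle to be the $k\ge 3$ construction. One has to keep $P_k(F_t)$ exactly scalar while making the symbol degenerate, and the bookkeeping of the symmetrized non-commutative products is heavy. I would first try $k=3$ with $n=r=3$, aiming for a sign change in $L_{e_1}$ driven by the cubic cyclic terms. I would then extend to general $k$ by padding with $F_{i\bar i}=\rho\,\mathrm{Id}$ in the remaining directions, which only contributes positive combinatorial factors.
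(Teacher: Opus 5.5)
Your $\sigma_2$ half is correct and is essentially the paper's argument. The paper's condition $\sum_{i\ne k}A_i>0$ is your $G_\xi>0$ at coordinate covectors, and the contradiction is the same. Your hedged step is in fact an exact identity. If $G=\sum_{i\ge2}F_{i\bar i}$ and $Gv=0$, then $\langle\{F_{1\bar1},G\}v,v\rangle=0$ and $\sum_{2\le i<j}\{F_{i\bar i},F_{j\bar j}\}=G^2-\sum_{i\ge2}F_{i\bar i}^2$. Hence
\[
\langle P_2(F)v,v\rangle=-\sum_{i\ge2}|F_{i\bar i}v|^2-\sum_{i<j}\bigl(|F_{i\bar j}v|^2+|F_{j\bar i}v|^2\bigr)\le 0 .
\]

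The $k\ge3$ half has a genuine gap. The counterexample is the whole content of that half, and it is never produced. Moreover, the ansatz you propose does not obviously admit one.

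With $F_{i\bar i}=\rho\,\mathrm{Id}$ frozen ($n=r=3$, $\rho=1$) you get $P_3(F_t)=6-3t^2S+t^3C$. Here $S=\sum_{i<j}(B_{ij}B_{ij}^*+B_{ij}^*B_{ij})$ and $C$ is the symmetrized cyclic cubic term. So $S$ and $C$ must \emph{each} be scalar, and symmetry does not hand you this. Your suggested choices all fail:
\begin{itemize}
\item Cyclic permutation matrices of size three are powers of one shift, hence commute and are simultaneously unitarily diagonalizable. The configuration then splits into $r$ line-bundle copies. Each copy stays a positive $(1,1)$-form as long as $P_3>0$, so ellipticity is preserved and no counterexample can arise.
\item For matrices in the span of the spin-one generators $J_1,J_2,J_3$ of $\mathrm{so}(3)$, symmetrized cubic products have no identity component, since the $\ell=3$ part vanishes in spin one. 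Scalarity therefore forces $C=0$, and the cubic term that was supposed to drive the sign change disappears.
\item Even the paper's non-commuting choice $B_{12}=aE_{12}$, $B_{13}=bE_{13}$, $B_{23}=cE_{23}$ fails under your constraint. Scalarity of $S$ forces $a=b=c$, and each diagonal entry of $P_3$ becomes $6-6a^2+2a^3$. This vanishes at $a\approx1.35$. The paper's test $\xi=dz^2\otimes E_{11}$ gives the value $2(3-a^2)$, which needs $a^2=3$ and so is never reached while the equation stays positive.
\end{itemize}
This is precisely why the paper does not freeze the diagonal. It takes $p=s=x=y=1$, $q=1+t$, $r=1+2t$, $v=1-0.9t$ and $a=b=c=\sqrt{3t}$, then solves for $u(t),z(t)$ so that $(\sqrt{-1}F)^3=K(t)\mathrm{Id}$. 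Shrinking $v$ lowers the $b^2(2s+v)$ penalty, so $K(t)>0$ survives until $3px-b^2=0$ at $t=1$.

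Your padding step covers only $n=k$, i.e.\ vbMA in dimension $k$, where only the $\mathcal A^3$ term survives. For $\sigma_k$ with $n>k$, $(\sqrt{-1}F)^k\wedge\omega^{n-k}$ also contains $\mathcal A^2\wedge\omega'$ and lower terms. Their symbols are positive definite ($\sigma_2$-type, as in your own $k=2$ analysis), so they are not mere positive factors and can undo the degeneracy.

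The paper handles $n>k$ as follows:
\begin{itemize}
\item It puts $\epsilon\,\mathrm{Id}$ in the extra directions of $F$ and weight $\epsilon$ on the first $k$ directions of $\omega$. The equation is then an $O(\epsilon)$ perturbation of the $k$-dimensional vbMA equation.
\item It restores exact scalarity by the implicit function theorem in $(u,z)$.
\item It re-tunes $a=b=c$ so that the test value still vanishes at $t=1$.
\end{itemize}
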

		\begin{thm}
			\label{thmj}
		The ellipticity is not preserved for the vector bundle $J$-equation at a point along continuity paths in the connected component of trivial solution when $rank(E)\ge 3$ and $\dim(X)\ge 3$.
	\end{thm}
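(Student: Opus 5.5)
We will prove the statement for the vector bundle $J$-equation, Theorem \ref{thmj}, by working at a single point and writing the curvature as a matrix-valued $(1,1)$-form. At a point we choose coordinates with $\omega=\sum_i\sqrt{-1}dz^i\wedge d\bar z^i$. Then $\sqrt{-1}F_H=\sum_{i,j}A_{i\bar j}\sqrt{-1}dz^i\wedge d\bar z^j$, where each $A_{i\bar j}\in\mathrm{End}(E_p)$ and $A_{j\bar i}=A_{i\bar j}^*$.

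\textbf{Linearization.} The operator $P(A)=c(\sqrt{-1}F_H)^n-\omega\wedge(\sqrt{-1}F_H)^{n-1}$ has linearization in $H$ given, at the symbol level, by the map $B\mapsto DP_A(\xi\bar\xi\otimes B)$. Here $\xi$ is a real covector and $B$ is Hermitian, since the variation of $F_H$ is $\bar\partial\partial(H^{-1}\delta H)$ plus lower order terms. Concretely, the symbol is
\[
\sigma_A(\xi)B=\sum_{m=0}^{n-1}\Big(c\,A^{m}\wedge(\xi\bar\xi B)\wedge A^{n-1-m}-\omega\wedge\sum_{m=0}^{n-2} A^m\wedge(\xi\bar\xi B)\wedge A^{n-2-m}\Big),
\]
symmetrized appropriately. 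Ellipticity at a point means that $\sigma_A(\xi)$ is invertible on $\mathrm{End}(E_p)$, or definite on Hermitian endomorphisms, for every $\xi\neq0$. At the trivial solution $A=\rho\,\delta_{ij}\mathrm{Id}$ with $c\rho=\frac{n-1}{n}$, the symbol reduces to a positive multiple of $|\xi|^2B$, so it is elliptic.

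\textbf{Explicit path.} We will construct a continuous path $A(t)$ of pointwise solutions, i.e.\ $P(A(t))=0$ at $p$, with $A(0)$ trivial and $\sigma_{A(1)}(\xi)$ degenerate for some $\xi$. Since every pointwise $A$ is realized as the curvature of some $H$ locally (take $H=\exp(-\sum A_{i\bar j}z^i\bar z^j)$ to second order), only the algebraic problem matters. The natural ansatz uses three directions $z^1,z^2,z^3$ and three basis vectors of $E_p$, which is where $\mathrm{rank}\ge3$ and $\dim\ge3$ enter. We keep the diagonal blocks $A_{i\bar i}$ diagonal matrices, put $A_{i\bar j}=\rho\,\delta_{ij}\mathrm{Id}$ for $i$ or $j>3$, and for $1\le i\ne j\le3$ set $A_{i\bar j}=s\,E_{ij}$, a matrix unit, or a noncommuting combination of such units. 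This makes the wedge products genuinely noncommutative, while the symmetry, e.g.\ under cyclic permutation of both indices, reduces $P(A)=0$ to a few scalar equations. Solving these for the diagonal entries as functions of $s$ by the implicit function theorem near $s=0$, and then continuing, should give a path. Along it we then compute $\sigma_{A(s)}(\xi)$ for $\xi=dz^1$, or for a well-chosen combination, restricted to a small invariant subspace of $B$'s. The aim is to find $s_*$ at which a determinant, a function of $s$, crosses zero while $P=0$ still holds.

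\textbf{Main obstacle.} The hard part is guaranteeing that the solution branch survives, with solvable and continuous diagonal entries, until the symbol determinant vanishes. Equivalently, the degeneration must occur before the branch hits a singularity of the scalar system. First, we will try to choose the off-diagonal structure so that, in the commutative (line bundle) case, the equation would force the $J$-positivity $cA-\omega>0$ type condition. In the noncommutative case, by contrast, the cross terms $E_{ij}E_{jk}E_{ki}$ contribute to $P$ and to $\sigma$ with different signs. This lets the symbol degenerate while the equation is maintained by adjusting the diagonal. If a closed form is unwieldy, we will reduce to the case $n=3$, $\mathrm{rank}=3$ and pad with trivial directions for the general case. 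There we would verify the sign change of the determinant numerically and then make it rigorous by an explicit rational parameter choice. Finally, the intermediate value theorem gives the first $s_*$ at which ellipticity fails.
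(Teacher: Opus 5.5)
\textbf{Verdict: there is a genuine gap.} Your overall strategy matches the paper's. You reduce to a pointwise algebraic problem in $\dim=3$, $\mathrm{rank}=3$, and put matrix units $E_{ij}$ in the off-diagonal blocks. The point of this is that the cubic cross term ($+2abc\,\mathrm{Id}$ in the paper's notation) enters the equation but not the relevant test quadratic form. You then pad with trivial directions and trivial rank.

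But what you have written is a search plan, not a proof. The theorem's whole content is an explicit continuous path of pointwise solutions that starts at the trivial solution and ends where ellipticity must fail. You never write the path down (``should give a path'', ``verify numerically''), and you never compute the symbol along it. The obstacle you name yourself, that the implicit-function branch in $s$ might break down before degeneration, is left unresolved.

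The paper resolves it with explicit formulas, in two stages:
\begin{itemize}
\item A commuting path ($a=b=c=0$, $\frac1r+\frac1v+\frac1z=3$) from $A_i=\mathrm{Id}$ to $A_1=A_2=2\,\mathrm{Id}$, $A_3=\frac12\mathrm{Id}$.
\item The path $p=2+2t(1-t)$, $s=2-t$, $x=y=\frac12$, $q=r=2$, $a=\sqrt3\,t$, $b=\sqrt{t/5}$, $c=5.5\sqrt{5t/12}$. These are chosen so that the three scalar equations can be solved explicitly for $v,u,z$. At $t=1$ the condition-$E$ form for $\xi=dz^3\otimes E_{11}$, which equals $2(3ps-a^2-p-s)$, vanishes.
\end{itemize}
Lemmas \ref{Eimpliesellipticity} and \ref{ellipticityimpliesE} (a self-adjointness and ``first time'' argument) then turn ``positivity fails at the endpoint'' into ``invertibility fails somewhere on the path''. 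Your phrase ``invertible \ldots, or definite'' blurs exactly this distinction, so you need that argument, or an explicit determinant computation, as well.

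Your own cyclic ansatz does close with a short computation if you solve for the right unknown. Take $c=1$, $A_{1\bar1}=\mathrm{diag}(\alpha,1,1)$, $A_{2\bar2}=\mathrm{diag}(1,\alpha,1)$, $A_{3\bar3}=\mathrm{diag}(1,1,\alpha)$, and $A_{i\bar j}=\sigma E_{ij}$ for $i\neq j$. The paper's formulas then give $(\sqrt{-1}F)^3-\omega\wedge(\sqrt{-1}F)^2=(2\alpha-2-4\sigma^2+2\sigma^3)\,\mathrm{Id}$. So $\alpha=1+2\sigma^2-\sigma^3$ is a global branch through the trivial solution, with no singularity. Along it, the condition-$E$ form for $\xi=dz^3\otimes E_{11}$ is $2(2\alpha-1-\sigma^2)=2(1+3\sigma^2-2\sigma^3)$, which vanishes at some $\sigma_*\in(1.5,2)$. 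That would be a legitimate and somewhat simpler alternative to the paper's two-stage path, but it has to actually be written down.

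Two slips also need fixing:
\begin{itemize}
\item The trivial solution of $c(\sqrt{-1}F)^n=\omega\wedge(\sqrt{-1}F)^{n-1}$ with $\sqrt{-1}F=\rho\,\omega\,\mathrm{Id}$ requires $c\rho=1$, not $c\rho=\frac{n-1}{n}$. At your value the symbol is a multiple of $\rho^{n-2}(nc\rho-(n-1))=0$, so it is zero, not positive.
\item The padding in the extra directions must use this same $\rho=1/c$. That is what makes the $(k+3)$-dimensional operator reduce to $\binom{k+2}{2}\left(\sum_l\sqrt{-1}dz^{l+3}\wedge d\bar z^{l+3}\right)^k$ times the three-dimensional one.
\end{itemize}
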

		\begin{thm}
			\label{thmdhym}
		The ellipticity is not preserved for the dHYM equation at a point along continuity paths in the connected component of trivial solution when $rank(E)\ge 3$ and $\dim(X)\ge 3$ near the small-volume limit (see section \ref{section dhym} for details).
	\end{thm}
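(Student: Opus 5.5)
The approach is to reduce Theorem \ref{thmdhym} to a pointwise linear-algebra statement and then build an explicit path of curvature tensors at a single point. At a point $p\in X$ we take normal coordinates with $\omega=\sqrt{-1}\sum_i dz^i\wedge d\bar z^i$ and an $H$-unitary frame of $E_p$. Then $\sqrt{-1}F_H$ is a Hermitian-matrix-valued $(1,1)$-form $A=\sqrt{-1}\sum A_{i\bar j}dz^i\wedge d\bar z^j$, with each $A_{i\bar j}\in\mathrm{End}(E_p)$ and $A_{i\bar j}^*=A_{j\bar i}$.

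To describe the small-volume limit, we rescale $\omega\mapsto\epsilon\omega$ as in the $Z$-critical framework of \cite{Dervan2024}. Expanding $(\epsilon\omega\otimes\mathrm{Id}-F_H)^n$ in $\epsilon$, the leading order of the dHYM operator is $\mathrm{Im}(e^{-\sqrt{-1}\hat\theta}(-F_H)^n)$ with $\hat\theta$ near the small-volume value. At the next order it becomes a combination of $(\sqrt{-1}F_H)^n$ and $\omega\wedge(\sqrt{-1}F_H)^{n-1}$, which has the same shape as the vector bundle $J$-equation. So near the small-volume limit the dHYM equation is a perturbation of the $J$-equation.

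Next we compute the linearization. For a Hermitian variation $\dot H=H\,u$ the curvature varies by $\dot F=\bar\partial\partial_H u$, so the principal symbol at a covector $\xi$ acts on a Hermitian endomorphism $u$ by
\[
u\ \mapsto\ \sum_{k}\binom{n}{k}\,\mathrm{Sym}\big(A^{k-1}\wedge(\xi\wedge\bar\xi\, u)\big)\wedge(\epsilon\omega)^{n-k}\cdots,
\]
where the sum is taken with the appropriate phase weights and symmetrised products of matrices. Ellipticity means this operator is injective, or definite, on Hermitian $u$ for every $\xi\neq 0$.

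As $\epsilon\to 0$ the symbol converges, after normalisation, to the symbol of the $J$-type (or vbMA-type) operator. Definiteness failing strictly is an open condition, so it survives this limit. Therefore it suffices to produce a path $A_t$ of solutions of the limiting pointwise equation and show two things: $A_0=\rho\sum\mathrm{Id}\,\sqrt{-1}dz^i\wedge d\bar z^i$, and the symbol at $A_1$ has a kernel or a negative direction strictly. The statement then transfers to small $\epsilon$ by the implicit function theorem along the path.

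For the construction itself, we reuse the counterexample built for Theorem \ref{thmj}, which we may assume is established in its section. It uses a $3\times3$ block: three coordinate directions and three frame vectors, with off-diagonal entries $A_{i\bar j}=t\,B_{ij}$ for $i\neq j$, where the $B_{ij}$ are non-commuting Hermitian-compatible matrices. For higher $n$ and higher rank we pad with $\rho\,\mathrm{Id}$.

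The path is kept on the solution set by rescaling the diagonal entries $A_{i\bar i}=\lambda_i(t)\mathrm{Id}+\dots$ so that the pointwise equation stays satisfied. We solve for the scalars by continuity, using the implicit function theorem in the diagonal parameters, and we check that the dHYM phase condition is preserved. Such pointwise data is realisable by an actual metric locally, taking $H=\mathrm{Id}-\sum A_{i\bar j}z^i\bar z^j+O(|z|^3)$. Since the question is pointwise, that suffices.

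The main obstacle is finding the explicit matrices and the covector $\xi$ together with the test endomorphism $u$ for which the symbol degenerates at $t=1$ while the equation is still solvable along $t\in[0,1]$. The non-commutativity of the $B_{ij}$ must create a negative mixed term $\mathrm{tr}(u\,B_{ij}u\,B_{ji})$ that dominates the positive contribution from the diagonal. I would first try $u$ a rank-one projection and $\xi=dz^1$, with $B_{ij}$ the standard $su(3)$ off-diagonal generators, and track the sign of the symbol quadratic form as a function of $t$. A second, lesser point is checking that the $O(\epsilon)$ corrections in the dHYM expansion do not restore ellipticity. This should follow from choosing $t=1$ slightly beyond the first degeneracy, so that the failure is strict.
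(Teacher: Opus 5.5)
\textbf{The gap: the transfer to small $\epsilon$ is asserted, not checked.} Your route is the paper's. With $\hat\theta=\frac{n\pi}{2}-\epsilon\theta$ and $\omega=\frac{\epsilon\theta}{n}\tilde\omega$, the dHYM operator is $\epsilon\theta\bigl((\sqrt{-1}F_H)^n-\tilde\omega\wedge(\sqrt{-1}F_H)^{n-1}\bigr)+O(\epsilon^2)$. You then import the $J$-equation path and perturb it by the implicit function theorem. That transfer is the only genuinely new step for dHYM, and with the unknowns in which the $J$-path is built, its hypothesis fails at the start. The path is obtained by solving for $(v,u,z)$. At $\epsilon=0$ the Jacobian of $(\alpha,\beta,\gamma)$ in these variables is lower triangular with determinant
\[
-\Bigl(\binom{n-1}{2}(n-3)!\Bigr)^{3}b^{2}\bigl(6qy-2(q+y)\bigr)\bigl(6rv-2(r+v)\bigr)=-\Bigl(\binom{n-1}{2}(n-3)!\Bigr)^{3}\frac{t\,(16+20t+100t^{2})}{5}.
\]
This vanishes at $t=0$. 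It also vanishes on the whole diagonal segment joining the trivial solution, where $a=b=c=0$ and the $\alpha$-row is zero. So ``IFT along the path'' only gives perturbed solutions for $t\in[\delta,1]$.

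The paper handles the start separately. At $A_1=A_2=2\mathrm{Id}$, $A_3=\frac12\mathrm{Id}$, the condition-$E$ form of the $J$-equation is positive definite; in the paper's notation it equals $(|l|^2+|m|^2+16|n|^2)\,\mathrm{Id}\otimes\mathrm{Id}$. By openness, condition $E$ therefore still holds at $t=\delta$ for the perturbed solution.

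In your set-up a cleaner repair is available, and you should state it. Since you do not need $p,s,a$ held exactly fixed, solve for $(s,u,z)$ instead. The Jacobian is again lower triangular, with diagonal entries (up to the factor $\binom{n-1}{2}(n-3)!$)
\[
6px-2b^2-2(p+x)=1+1.6t-2t^2,\qquad 6qy-2(q+y)=1,\qquad 6rv-2(r+v)=16+20t+100t^2 .
\]
The first is $\ge 0.6$ on $[0,1]$, and all three stay positive slightly beyond $t=1$. On the diagonal segment $p=q=r=s=u=v=1+\tau$, $x=y=z=\frac{1+\tau}{1+3\tau}$, they become $\frac{2(1+\tau)}{1+3\tau}$ (twice) and $(1+\tau)(2+6\tau)$. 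So the IFT runs on the whole compact path, starting next to the trivial solution.

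\textbf{The rest of the proposal.} Your other departure from the paper is sound. The paper keeps the endpoint exactly degenerate for each $\epsilon$ by building the correction terms $G(\epsilon)$, $H(p,s,\epsilon)$ into $a(t)$. You instead run the $J$-path slightly past $t=1$, where $3ps-a^2-p-s=-15\eta+O(\eta^2)$ at $t=1+\eta$, and use that strict negativity is open in $\epsilon$. Together with positivity at the start, Lemma \ref{ellipticityimpliesE} then forces ellipticity to fail somewhere on the path.

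Your last paragraph, however, contradicts your decision to import the $J$-example and misreads it. The data are already fixed there: $\xi=dz^3\otimes E_{11}$, $B_{12}=aE_{12}$, $B_{13}=bE_{13}$, $B_{23}=cE_{23}$, with $a=\sqrt3\,t$, $b=\sqrt{t/5}$, $c=5.5\sqrt{5t/12}$. These are not of the form $tB_{ij}$. The negativity also does not come from mixed terms $\mathrm{Tr}(uB_{12}uB_{12}^*)$, which vanish for $u=E_{11}$. It comes from $-\mathrm{Tr}(2E_{11}\{B_{12},B_{12}^*\})=-2a^2$ and the $\tilde\omega$-terms $-2(p+s)$. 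Together with $6ps$ these give $2(3ps-a^2-p-s)$.
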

	In section \ref{section 2} we give some preliminaries. In section \ref{section vbma} we prove theorem  \ref{thmvbma}. In section \ref{sectionsigmak} and \ref{sectionsigma2}, we prove theorem \ref{thmsigma}. In section \ref{section J-equation} we prove theorem \ref{thmj}. In section \ref{section dhym} we prove theorem \ref{thmdhym}.\\ \\
	\textbf{Acknowledgements:} The authors would like to thank Ruadha\'i Dervan, Vamsi Pingali, Lars Martin Sektnan, Carlo Scarpa, and Shing-Tung Yau for making helpful comments on the first draft. The second author was supported by a postdoctoral fellowship from the Institute of Geometry and Physics at the University of Science and Technology of China.
	\section{Preliminaries}
	\label{section 2}
	Suppose $E$ is a holomorphic vector bundle over a compact K\"ahler manifold $X$ and $\eta$ is a given volume form. Then the vector bundle Monge-Amp\`ere equation (\cite{Pingaliadv}) asks for a metric $H$ on $E$ such that 
	\[(\sqrt{-1}F_{H})^{n}=\eta\otimes \mathrm{Id}.\]
First let us recall the definition of ellipticity.
\begin{defn}[ See Appendix $III$ in \cite{Donaldson-Kronheimer}]
	 Let $D:\Gamma(V_{1})\rightarrow \Gamma(V_{2})$ be a linear differenial operator of order $l$ between sections of bundles over a compact base manifold $X$. The highest order term in $D$ defines the `Symbol' $\sigma_{D}$; for each cotangent vector $\chi$ at a point $x\in X$ the symbol gives a linear map $\sigma_{D}(\chi)$ from the fibre of $V_{1}$ at $x$ to that of $V_{2}$. The operator is elliptic if $\sigma_{D}(\chi)$ is invertible for all non-zero $\chi$.	 
	\end{defn}
	If $H_{0}$ is some Hermitian metric on $E$, then any other Hermitian
	metric is of the form $H =H_{0}e^{G}$, where $G\in End(E,H_{0})$, i.e. $G$ is a $H_{0}$-Hermitian
	section of $End(E)$. The linearisation of the vbMA equation at $H_{0}$ i.e., $L_{H_{0}}: End(E,H_{0})\rightarrow \Omega^{(n,n)}(End(E))$ is given by 
	\[\Psi\mapsto \sum_{j=0}^{n-1}(\sqrt{-1}F_{H_{0}})^{j}\wedge\sqrt{-1}\bar{\partial}\partial_{H_{0}}\Psi\wedge(\sqrt{-1}F_{H_{0}})^{n-1-j}.\]
	\begin{lem}[Proposition $1$ in \cite{ballal-pingali}]
		Using a pairing, one can identify $\Omega^{(n,n)}(End(E))$  with $End(E)$. Furthermore, if $H_{0}$ solves the vbMA equation then $L_{H_{0}}\Psi\in End(E,H_{0}) $. Hence we can think that the linearisation is an operator on $End(E,H_{0})$.
	\end{lem}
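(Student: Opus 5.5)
The plan is to make both claims explicit through a pointwise computation in a frame adapted to $H_{0}$. First, the identification of $\Omega^{(n,n)}(End(E))$ with $End(E)$ should be given by ``dividing by the volume form'': fix the volume form $\eta$ (or $\omega^{n}/n!$). Each $(n,n)$-form with values in $End(E)$ can be written uniquely as $A\,\eta$ with $A\in\Gamma(End(E))$, and we identify it with $A$. The content of the statement is then that when $H_{0}$ solves $(\sqrt{-1}F_{H_{0}})^{n}=\eta\otimes\mathrm{Id}$ and $\Psi$ is $H_{0}$-Hermitian, the coefficient $A=L_{H_{0}}\Psi/\eta$ is $H_{0}$-Hermitian, i.e. $A^{*_{H_{0}}}=A$.

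I will work at a point in an $H_{0}$-unitary frame and write $F:=\sqrt{-1}F_{H_{0}}$. The basic tool is the adjoint operation on $End(E)$-valued forms: $(\alpha\otimes M)^{\dagger}=\bar{\alpha}\otimes M^{*}$. Two facts about it are needed.
\begin{itemize}
\item For wedge products, $(\mu\wedge\nu)^{\dagger}=(-1)^{pq}\,\nu^{\dagger}\wedge\mu^{\dagger}$ for forms of degrees $p$ and $q$. The order of the endomorphism factors reverses, and the sign comes from commuting the scalar forms.
\item $F$ is a real $(1,1)$-form with Hermitian endomorphism values, so $F^{\dagger}=F$. Likewise, $\sqrt{-1}\bar\partial\partial_{H_{0}}\Psi$ is self-adjoint when $\Psi$ is Hermitian. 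To see this, write $\bar\partial\partial_{H_{0}}\Psi=-\partial_{H_{0}}\bar\partial\Psi+[F_{H_{0}},\Psi]$ and use $(\partial_{H_{0}}\Psi)^{\dagger}=\bar\partial\Psi^{*}$ for the Chern connection. Conjugation then gives $(\bar\partial\partial_{H_{0}}\Psi)^{\dagger}=-\bar\partial\partial_{H_{0}}\Psi$, and the factor $\sqrt{-1}$ makes the whole term self-adjoint.
\end{itemize}

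Since all factors are 2-forms, the signs $(-1)^{pq}$ are all $+1$. Taking $\dagger$ of the $j$-th term $F^{j}\wedge B\wedge F^{n-1-j}$, where $B=\sqrt{-1}\bar\partial\partial_{H_{0}}\Psi$, gives $F^{n-1-j}\wedge B\wedge F^{j}$. Summing over $j$ shows that $L_{H_{0}}\Psi$ is invariant under $\dagger$. Since $\eta$ is a real top form, this means exactly that $A$ is Hermitian.

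The one point I expect to need care is where the hypothesis that $H_{0}$ solves the equation actually enters. The symmetry argument above seems to work for any $H_{0}$, so the hypothesis must be relevant to the choice of pairing. Ballal--Pingali presumably identify $(n,n)$-forms with endomorphisms via a pairing built from $F$ itself, or they regard the target as the tangent space to the solution set. In that setting, the equation $F^{n}=\eta\,\mathrm{Id}$ ensures that $F^{n}$ is a scalar (central) volume form. Because it is central, dividing by it commutes with taking adjoints, and this is what preserves self-adjointness.

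So I would first pin down the pairing as division by $F^{n}=\eta\,\mathrm{Id}$, and then run the $\dagger$-symmetry argument above. The main obstacle is getting the signs right in the adjoint of $\bar\partial\partial_{H_{0}}\Psi$; the key ingredient there is the identity $\bar\partial\partial_{H_0}+\partial_{H_0}\bar\partial=F_{H_0}$, acting on endomorphisms by commutator.
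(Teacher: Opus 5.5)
There is a genuine gap, and it is at the step you yourself flagged as delicate. Your claim that $B=\sqrt{-1}\bar\partial\partial_{H_0}\Psi$ is self-adjoint for $H_0$-Hermitian $\Psi$ is false. Use the two facts you list: $\dagger$ commutes with the Chern connection, so $(\bar\partial\mu)^\dagger=\partial_{H_0}(\mu^\dagger)$, and $\partial_{H_0}\bar\partial+\bar\partial\partial_{H_0}=[F_{H_0},\cdot\,]$. Together they give
\[
(\bar\partial\partial_{H_0}\Psi)^\dagger=\partial_{H_0}\bar\partial\Psi=-\bar\partial\partial_{H_0}\Psi+[F_{H_0},\Psi],
\qquad\text{hence}\qquad B^\dagger=B-[F,\Psi],\quad F=\sqrt{-1}F_{H_0}.
\]
You dropped the commutator.

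Here is a concrete check. Take rank $2$, $n=1$, and $h=I+S|z|^2$ near $0$ with $S$ Hermitian, using the convention $\langle s,t\rangle_{H_0}=t^{*}hs$. Let $\Psi=h^{-1}P$ with $P$ a constant Hermitian matrix, so $h\Psi=P$ and $\Psi$ is $H_0$-Hermitian. Then $\bar\partial\partial\Psi(0)=-SP\,d\bar z\wedge dz$ and $F_{H_0}(0)=S\,d\bar z\wedge dz$, which gives $B(0)=PS\,\sqrt{-1}dz\wedge d\bar z$. This is not Hermitian when $[P,S]\neq0$.

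So your argument ``working for every $H_0$'' was a sign of this error, not evidence that the hypothesis enters through the pairing. Dividing by $F^n=\eta\,\mathrm{Id}$ is the same as dividing by $\eta$ and cannot change anything.

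The repair is to keep the commutator. Since $\dagger$ reverses the order of even-degree factors,
\[
(L_{H_0}\Psi)^\dagger=\sum_{j=0}^{n-1}F^{n-1-j}\wedge B^\dagger\wedge F^{j}
=L_{H_0}\Psi-\sum_{j=0}^{n-1}\bigl(F^{j+1}\Psi F^{n-1-j}-F^{j}\Psi F^{n-j}\bigr)
=L_{H_0}\Psi-[F^n,\Psi],
\]
because the middle sum telescopes. This is exactly where the hypothesis is used. If $F^n=\eta\otimes\mathrm{Id}$, then $F^n$ is central, so $[F^n,\Psi]=0$ and $L_{H_0}\Psi$ is self-adjoint. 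Its quotient by the real form $\eta$ therefore lies in $End(E,H_0)$.

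Equivalently, $F_{H_t}^n$ is $H_t$-Hermitian for every $t$. Differentiating that identity along $H_t=H_0e^{t\Psi}$ produces a correction of the form $[F^n,\Psi]$, which vanishes only at solutions.

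Your identification of $\Omega^{(n,n)}(End(E))$ with $End(E)$ by division by $\eta$ is fine. The paper itself gives no argument for this lemma beyond citing Ballal--Pingali.
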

	The principal symbol can be seen as 
	\[\sigma_{L_{H_{0}}}(\chi)(\Psi)=\sum_{j=0}^{n-1}(\sqrt{-1}F_{H_{0}}(p))^{j}\wedge\sqrt{-1}\bar{\chi}\chi\Psi\wedge(\sqrt{-1}F_{H_{0}}(p))^{n-1-j}.\]
	To define ellipticity we need that $\sigma_{L_{H_{0}}}(\chi)$ is invertible for all non-zero $\chi$. For our computational purposes we introduce the following positivtiy condition $
E$ which should be implied by any reasonable positivity condition $K$. 
 \\
	\textbf{Condition $E$ for the vbMA equation:}
		Suppose $p\in X$ is a point and $H_{0}$ is a metric on the bundle $E$. The condition is that for all endomorphism valued $(1,0)$ form of the type $\xi=a\otimes g$ where $0\ne a\in\Omega^{(1,0)}(X_{p})$ and $0\ne g\in End(E_{p})$, we have 
		\[\mathrm{Tr}\left(\sum_{j=0}^{n-1}\sqrt{-1}\xi\wedge(\sqrt{-1}F_{H_{0}}(p))^{j}\wedge \xi^{\dagger}\wedge(\sqrt{-1}F_{H_{0}}(p))^{n-1-j}\right)>0.\]
Similarly as above, we get the analogous condition $E$ for the vector bundle $J$-equation.\\ \\
\textbf{Condition $E$ for the vector bundle $J$-equation:}
	Suppose $p\in X$ is a point and $H_{0}$ is a metric on $E$. The condition $E$ for the vector bundle $J$-equation is that for all endomorphism valued $(1,0)$ form of the type $\xi=a\otimes g$ where $0\ne a\in\Omega^{(1,0)}(X_{p})$ and $0\ne g\in End(E_{p})$, we have 
	
	\begin{align*}
	&c\mathrm{Tr}\left(\sum_{j=0}^{n-1}\sqrt{-1}\xi\wedge(\sqrt{-1}F_{H_{0}}(p))^{j}\wedge \xi^{\dagger}\wedge(\sqrt{-1}F_{H_{0}}(p))^{n-1-j}\right)\\
	&-\mathrm{Tr}\left(\sum_{j=0}^{n-2}\sqrt{-1}\xi\wedge(\sqrt{-1}F_{H_{0}}(p))^{j}\wedge \xi^{\dagger}\wedge(\sqrt{-1}F_{H_{0}}(p))^{n-2-j}\right)\wedge \omega>0.
	\end{align*}

Similarly, the Condition $E$ for other equations (the $\sigma_{k}$, the dHYM) can be defined but we will not do it here. Now we will prove some lemmas showing the relationship between ellipticity and condition $E$. We will only consider the vbMA case, the other cases ($J$-equation, $\sigma_{k}$-equation, dHYM equation) are similar. 
	\begin{lem}
		\label{Eimpliesellipticity}
		Condition $E$ implies ellipticity for the vbMA equation.
	\end{lem}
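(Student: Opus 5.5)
We show that for each nonzero $\chi\in T^{*}_{p}X$ the symbol $\sigma_{L_{H_{0}}}(\chi)$, viewed as a linear map $End(E_{p})\to End(E_{p})$ via the pairing of Proposition 1 in \cite{ballal-pingali}, is injective. Since it maps a finite-dimensional space to itself, injectivity gives invertibility. The idea is to pair $\sigma_{L_{H_{0}}}(\chi)(\Psi)$ against $\Psi$ using the trace and recognise the resulting quantity as the expression in Condition $E$.

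\textbf{Reduction to $(1,0)$-forms.} Only the $(1,0)$-part of a real cotangent vector enters $\sqrt{-1}\bar\chi\chi$. So write $\chi=a+\bar a$ with $0\ne a\in\Omega^{(1,0)}(X_{p})$, so that the symbol becomes
\[\Psi\mapsto \sum_{j=0}^{n-1}(\sqrt{-1}F_{H_{0}}(p))^{j}\wedge\sqrt{-1}\bar a\wedge a\,\Psi\wedge(\sqrt{-1}F_{H_{0}}(p))^{n-1-j}\]
up to sign and normalisation conventions.

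\textbf{Pairing.} Suppose $\sigma_{L_{H_{0}}}(\chi)(\Psi)=0$ for some $\Psi\ne 0$. Multiply by $\Psi$ and take the trace, obtaining the top form
\[\mathrm{Tr}\Big(\Psi\sum_{j}(\sqrt{-1}F_{H_{0}})^{j}\wedge\sqrt{-1}\bar a\wedge a\,\Psi\wedge(\sqrt{-1}F_{H_{0}})^{n-1-j}\Big)=0 .\]
Now manipulate this expression.
\begin{itemize}
\item Scalar forms commute with endomorphisms, so $a$ and $\bar a$ can be moved next to the copies of $\Psi$.
\item Cyclicity of the trace moves the leading $\Psi$ around, with sign changes when odd forms pass each other.
\item Use $\Psi^{\dagger}=\Psi$, since $\Psi$ is $H_{0}$-Hermitian.
\end{itemize}
With $\xi=a\otimes\Psi$, so that $\xi^{\dagger}=\bar a\otimes\Psi$, the expression becomes
\[\mathrm{Tr}\Big(\sum_{j=0}^{n-1}\sqrt{-1}\,\xi\wedge(\sqrt{-1}F_{H_{0}}(p))^{j}\wedge\xi^{\dagger}\wedge(\sqrt{-1}F_{H_{0}}(p))^{n-1-j}\Big),\]
possibly with the index reversed $j\leftrightarrow n-1-j$, which is harmless because the sum is symmetric. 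Condition $E$ applied to $\xi=a\otimes g$ with $g=\Psi\ne0$ makes this strictly positive, a contradiction. Hence $\Psi=0$.

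\textbf{Main obstacle.} The delicate step is the bookkeeping in the pairing. One must check the signs and factors of $\sqrt{-1}$ when moving the odd forms $a,\bar a$ past each other and past $\Psi$ inside the trace. The $2$-forms $F_{H_{0}}$ commute with scalar forms but not with $\Psi$ as endomorphisms. One must also confirm that the pairing identifying $\Omega^{(n,n)}(End E)$ with $End E$ is the trace pairing used here, so that vanishing of the symbol forces vanishing of the paired quantity.

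I would fix conventions by checking the rank-one, flat case $F_{H_{0}}=\rho\,\omega$. There the quantity reduces to $n\rho^{n-1}|a|^{2}|\Psi|^{2}\omega^{n}/n$ times a positive constant. This pins down the overall sign, and the general case then follows by the same manipulations.
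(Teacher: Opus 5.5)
Your proposal is correct and is essentially the paper's proof. Both assume $\sigma_{L_{H_{0}}}(\chi)(\Psi)=0$ with $\Psi\neq 0$, pair with $\Psi$ under the trace (the paper multiplies by $\Psi^{*}$ on the right and takes $\xi=\chi\Psi^{*}$, which for Hermitian $\Psi$ is your $\xi=a\otimes\Psi$), and conclude that the Condition $E$ quantity vanishes, contradicting its strict positivity. The sign and normalisation bookkeeping you flag as the main obstacle is harmless: only the vanishing of the paired quantity is used, and an overall sign such as $\sqrt{-1}\,\bar a\wedge a=-\sqrt{-1}\,a\wedge\bar a$ does not affect that.
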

	\begin{proof}
		Suppose $p\in X$ is a point, $H_{0}$ is a metric on the bundle $E$ and they satisfy condition $E$ for the vbMA equation. Now on the contrary, let us assume it is not elliptic at $p$. So there exists a non-zero co-tangent vector $\chi\in\Omega^{1,0}(X_{p})$ and non-zero endomorphism 
		$\Psi\in End(E_{p})$ such that
		\begin{align*}
			&\sigma_{L_{H_{0}}}(\chi)(\Psi)\\
			&=\sum_{j=0}^{n-1}(\sqrt{-1}F_{H_{0}}(p))^{j}\wedge\sqrt{-1}\bar{\chi}\chi\Psi\wedge(\sqrt{-1}F_{H_{0}}(p))^{n-1-j}\\	
			&=0.
			\end{align*}
		Multiplying by $\Psi^{*}$ on the right and taking trace we see that 
		\begin{align*}
			&\mathrm{Tr}\left(\sum_{j=0}^{n-1}\sqrt{-1}\xi\wedge(\sqrt{-1}F_{H_{0}}(p))^{j}\wedge \xi^{\dagger}\wedge(\sqrt{-1}F_{H_{0}}(p))^{n-1-j}\right)\\
			&=0,
		\end{align*}
		where $\xi=\chi\Psi^{*}$. This is a contradiction. Hence we are done.
	\end{proof}
	In the reverse direction, we have
	\begin{lem}
		\label{ellipticityimpliesE}
		Suppose $H_{t}$ is a continuous path of metrics for $t\in [0,1]$ along which ellipticity is preserved. If $H_{0}$ satisfies condition $E$, then $H_{t}$ satisfies condition $E$ for all $t\in [0,1]$.
	\end{lem}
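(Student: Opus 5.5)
A connectedness argument in $t$ should work. For $\xi=a\otimes g$ define
\[
Q_t(a,g)=\mathrm{Tr}\left(\sum_{j=0}^{n-1}\sqrt{-1}\xi\wedge(\sqrt{-1}F_{H_{t}}(p))^{j}\wedge \xi^{\dagger}\wedge(\sqrt{-1}F_{H_{t}}(p))^{n-1-j}\right),
\]
viewed as a real multiple of a fixed volume form at $p$. The first step is to check that $Q_t(a,g)$ is real. Taking the conjugate transpose inside the trace reverses the order of the factors. Up to the usual signs from reordering forms, this exchanges the terms with indices $j$ and $n-1-j$, so the sum is self-adjoint. Since $Q_t(a,g)$ is homogeneous of degree $2$ in $a$ and in $g$ separately, we may restrict $(a,g)$ to the compact set $S=\{|a|=1,\ |g|=1\}$. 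Here the norm of $g$ is taken with respect to a fixed reference metric, so that $S$ does not depend on $t$. Because $H_t$ is a continuous path (in a topology where $F_{H_t}(p)$ varies continuously, e.g.\ $C^2$), the function $(t,a,g)\mapsto Q_t(a,g)$ is continuous on $[0,1]\times S$. Consequently $m(t)=\min_S Q_t$ is continuous in $t$.

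Next I would set $T=\{t\in[0,1] : m(t)>0\}$; condition $E$ at time $t$ is exactly the statement $t\in T$. By hypothesis $0\in T$, and $T$ is open by continuity of $m$. To show $T=[0,1]$ it suffices to show $T$ is closed. Suppose $t_i\in T$ and $t_i\to t_*$. Then $m(t_*)\ge 0$, and it remains to exclude $m(t_*)=0$. If $m(t_*)=0$, there is $(a,g)\in S$ with $Q_{t_*}(a,g)=0$ while $Q_{t_*}\ge 0$ everywhere on $S$.

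The key step, and the main obstacle, is converting this boundary minimum into a violation of ellipticity at $t_*$. First observe that the map $g\mapsto Q_{t_*}(a,g)$ is a Hermitian quadratic form in $g$: it is sesquilinear in $(g,g)$ through $\xi$ and $\xi^\dagger$. The form can be written as
\[
Q_{t_*}(a,g)=\langle \sigma_{L_{H_{t_*}}}(\chi)(\Psi),\Psi\rangle ,\qquad \chi=a,\ \Psi=g^{*},
\]
which is the identity already used in the proof of Lemma \ref{Eimpliesellipticity}. Thus the symbol $\sigma_{L_{H_{t_*}}}(\chi)$ is a positive semidefinite Hermitian operator on $End(E_p)$ with respect to the trace pairing. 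Self-adjointness follows from the reality established in the first step together with polarization. A positive semidefinite Hermitian operator whose quadratic form vanishes at $\Psi\neq 0$ annihilates $\Psi$, because $\langle A\Psi,\Psi\rangle=\|A^{1/2}\Psi\|^2$. Hence $\sigma_{L_{H_{t_*}}}(\chi)(\Psi)=0$ with $\chi\ne0$ and $\Psi\ne0$. This contradicts ellipticity at $t_*$, so $m(t_*)>0$ and $T$ is closed.

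Connectedness of $[0,1]$ then gives $T=[0,1]$. The delicate points are to verify carefully that the symbol is self-adjoint, keeping track of the signs when wedging $(1,0)$ and $(0,1)$ forms with the even-degree forms $F_{H_t}$, and to handle the pairing that identifies $(n,n)$-forms with $End(E)$. I would handle both by working at $p$ in an $H_{t_*}$-unitary frame and normal coordinates.
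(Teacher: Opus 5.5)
Your proposal is correct and is essentially the paper's argument. The paper takes the first time $t_0$ at which condition $E$ fails, notes that the form is positive semidefinite there by continuity, and concludes from $\mathrm{Tr}\big(\Psi^{*}\sigma_{L_{H_{t_0}}}(\chi)(\Psi)\big)=0$ that $\sigma_{L_{H_{t_0}}}(\chi)(\Psi)=0$, contradicting ellipticity. Your open/closed argument in $t$ is the same reasoning, and your self-adjointness and semidefiniteness step (a semidefinite Hermitian form vanishing at $\Psi$ forces the operator to annihilate $\Psi$) justifies that last implication, which the paper leaves implicit.
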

	\begin{proof}
		On the contrary, we assume that $t_{0}$ is the first time where the condition $E$ is violated. Because of continuity it is positive semidefinite  at $t_{0}$. Hence there is $\xi=\chi\Psi^{*}$(with $\chi\ne 0$, $\Psi^{*}\ne 0$) such that 
		\begin{align*}
			&\mathrm{Tr}\left(\sum_{j=0}^{n-1}\sqrt{-1}\xi\wedge(\sqrt{-1}F_{H_{t_{0}}}(p))^{j}\wedge \xi^{\dagger}\wedge(\sqrt{-1}F_{H_{t_{0}}}(p))^{n-1-j}\right)=0\\
			&\implies \mathrm{Tr}\left(\sum_{j=0}^{n-1}\sqrt{-1}\chi\Psi^{*}\wedge(\sqrt{-1}F_{H_{t_{0}}}(p))^{j}\wedge \bar{\chi}\Psi\wedge(\sqrt{-1}F_{H_{t_{0}}}(p))^{n-1-j}\right)=0\\
			&\implies\mathrm{Tr}\left(\Psi^{*}\sum_{j=0}^{n-1}(\sqrt{-1}F_{H_{t_{0}}}(p))^{j}\wedge\sqrt{-1} \bar{\chi}\chi\Psi\wedge(\sqrt{-1}F_{H_{t_{0}}}(p))^{n-1-j}\right)=0\\
			&\implies \sum_{j=0}^{n-1}(\sqrt{-1}F_{H_{t_{0}}}(p))^{j}\wedge\sqrt{-1} \bar{\chi}\chi\Psi\wedge(\sqrt{-1}F_{H_{t_{0}}}(p))^{n-1-j}=0.
		\end{align*} 
		This is a contradiction to ellipticity. So we are done.
	\end{proof}
	Our strategy to produce counter-example is the following. We will construct a continuous path of metrics $H_{t}$ at a point solving the equation such that at $t=0$, condition $E$ will be satisfied but at $t=1$, condition $E$ will not be satisfied and it will satisfy the equation for all $t\in[0,1]$. By lemma \ref{Eimpliesellipticity}, ellipticity holds at $t=0$. If ellipticity holds for all $t\in[0,1]$, then condition $E$ holds for all $t\in[0,1]$ by lemma \ref{ellipticityimpliesE}. But it will be false by our construction. Hence ellipticity will not be preserved at a point along continuity paths. \\
	In the following we show that the trivial solution at a point to the vbMA equation satisfies condition $E$ and consequently elliptic.
\begin{lem}
Suppose $p\in X$ is a point and $H_{0}$ is a metric on the bundle $E$ such that at $p$, the curvature is given by
	\[\sqrt{-1}F_{H_{0}}(p)=\sum_{i=1}^{n}\rho\mathrm{Id}\sqrt{-1}dz^{i}\wedge d\bar{z}^{i},\]
	for some positive real number $\rho$. Then this satisfies the vbMA equation and the condition $E$ for the vbMA equation.
	\end{lem}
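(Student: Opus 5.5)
The plan is a direct computation at $p$. Both the equation and condition $E$ are pointwise statements, and because $\sqrt{-1}F_{H_{0}}(p)=\rho\,\omega_{e}\otimes\mathrm{Id}$ with $\omega_{e}=\sum_{i}\sqrt{-1}dz^{i}\wedge d\bar z^{i}$, the curvature is a scalar $(1,1)$-form times the identity. It therefore commutes with every endomorphism-valued form (up to the usual sign rules, which are trivial here since it has even degree). This reduces everything to scalar form computations.

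\textbf{The vbMA equation.} Compute
\[(\sqrt{-1}F_{H_{0}}(p))^{n}=\rho^{n}\omega_{e}^{n}\otimes\mathrm{Id}.\]
The form $\omega_{e}^{n}=n!\prod_{i}\sqrt{-1}dz^{i}\wedge d\bar z^{i}$ is a positive volume form at $p$. So the equation holds at $p$ with $\eta(p)=\rho^{n}\omega_{e}^{n}$, which is how the trivial solution is meant: $\eta$ is taken to agree with this at $p$.

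\textbf{Condition $E$.} Take $\xi=a\otimes g$ with $a\neq 0$ and $g\neq 0$. Since $\sqrt{-1}F_{H_{0}}(p)$ is central, each of the $n$ summands equals
\[\rho^{n-1}\,\sqrt{-1}\,a\wedge\bar a\wedge\omega_{e}^{n-1}\otimes g g^{\dagger}.\]
Here $\xi^{\dagger}=\bar a\otimes g^{\dagger}$, with $g^{\dagger}$ the $H_{0}$-adjoint. Moving $\omega_{e}^{j}$ past $\xi^{\dagger}$ costs no sign because it has even degree. Taking the trace gives
\[n\rho^{n-1}\,\mathrm{Tr}(gg^{\dagger})\,\sqrt{-1}\,a\wedge\bar a\wedge\omega_{e}^{n-1}.\]

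It remains to check two positivity facts. First, $\mathrm{Tr}(gg^{\dagger})=|g|_{H_{0}}^{2}>0$ because $g\neq0$. Second, $\sqrt{-1}\,a\wedge\bar a\wedge\omega_{e}^{n-1}>0$ as a top form. For the second, write $a=\sum_{i}a_{i}dz^{i}$. Then
\[\sqrt{-1}\,a\wedge\bar a\wedge\omega_{e}^{n-1}=\frac{1}{n}\Big(\sum_{i}|a_{i}|^{2}\Big)\,\omega_{e}^{n}.\]
This holds because only the diagonal terms $a_{i}\bar a_{i}\sqrt{-1}dz^{i}\wedge d\bar z^{i}$ survive the wedge with $\omega_{e}^{n-1}$, and each contributes $(n-1)!\prod_{k}\sqrt{-1}dz^{k}\wedge d\bar z^{k}$. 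The result is a positive multiple of the volume form, so condition $E$ holds. By Lemma \ref{Eimpliesellipticity} the trivial solution is also elliptic.

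The only delicate step is the sign bookkeeping in the second fact. One must confirm that, with the paper's convention for $\xi^{\dagger}$ (conjugating the form part and taking the adjoint of the endomorphism part), the factor $\sqrt{-1}$ makes $\sqrt{-1}\,a\wedge\bar a$ a nonnegative $(1,1)$-form rather than a nonpositive one, and that no sign arises from permuting the even-degree factors $\omega_{e}^{j}$. I would check this on the single term $a=dz^{1}$ and $g=\mathrm{Id}$, where the expression reduces to $\rho^{n-1}n\,\mathrm{rank}(E)\,\frac{1}{n}\omega_{e}^{n}$, which is clearly positive.
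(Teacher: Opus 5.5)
Your proposal is correct and is the same direct pointwise computation as the paper's: you use that $\sqrt{-1}F_{H_{0}}(p)$ is a scalar form times $\mathrm{Id}$ to pull out $\rho^{n-1}$, and your explicit evaluation of the trace as $n!\,\rho^{n-1}\mathrm{Tr}(gg^{\dagger})(\sum_i|a_i|^2)\prod_i\sqrt{-1}dz^i\wedge d\bar z^i$ supplies the positivity check that the paper only asserts. One small correction of direction: $\eta$ is given, so instead of taking $\eta(p)$ to agree with $\rho^n n!\prod_i\sqrt{-1}dz^i\wedge d\bar z^i$, you should (as the paper does) choose $\rho=(C/n!)^{1/n}$, where $\eta(p)=C\prod_i\sqrt{-1}dz^i\wedge d\bar z^i$ with $C>0$.
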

\begin{proof}
We see that 
\[(\sqrt{-1}F_{H_{0}}(p))^{n}=\rho^{n}\mathrm{Id}\sqrt{-1}dz^{1}\wedge d\bar{z}^{1}\dots \sqrt{-1}dz^{n}\wedge d\bar{z}^{n}.\] Choosing $\rho$ appropriately we can make sure that the equation is satisfied at $p$. We now show that condition $E$ holds. We calculate 
	\begin{align*}
		&\mathrm{Tr}\left(\sum_{j=0}^{n-1}\sqrt{-1}\xi\wedge(\sqrt{-1}F_{H_{0}}(p))^{j}\wedge \xi^{\dagger}\wedge(\sqrt{-1}F_{H_{0}}(p))^{n-1-j}\right)\\
		&=\rho^{n-1}\mathrm{Tr}\left(\sum_{j=0}^{n-1}\sqrt{-1}\xi\wedge(\sum_{i=1}^{n}\sqrt{-1}dz^{i}\wedge d\bar{z}^{i})^{j}\wedge \xi^{\dagger}\wedge(\sum_{i=1}^{n}\sqrt{-1}dz^{i}\wedge d\bar{z}^{i})^{n-1-j}\right)\\
		&>0.
	\end{align*}
	We are done.
\end{proof}
Similarly we can show that the trivial solution satisfies condition $E$ for other cases ( $J$-equation, $\sigma_{k}$-equations, dHYM equation). 

		\section{Counter Example for the vbMA Equation}
		\label{section vbma}
		In this section we prove theorem  \ref{thmvbma}.
		The condition $E$ for the vbMA equation is a pointwise condition and hence we will consider everything pointwise. First let us consider the case $dim(X)=3$ and $rank(E)=3$. We know that we can have trivialisation where the curvature of the Chern connection of a metric at a point takes the form 
		\[\sqrt{-1}F_{H}=\sum_{i=1}^{3}A_{i}\sqrt{-1}dz^{i}\wedge d\bar{z}^{i}+\sum_{i<j}(B_{ij}\sqrt{-1}dz^{i}\wedge d\bar{z}^{j}+B_{ij}^{*}\sqrt{-1}dz^{j}\wedge d\bar{z}^{i}),\]
		where $A_{i}$ are Hermitian and $B_{ij}$ are any matrix. Hereafter we will write $\sqrt{-1}F_{H}$ instead of $\sqrt{-1}F_{H}(p)$ like as above.
		Then we have the following lemma.
		\begin{lem}
			\label{lemma}
		With the notations as above, we have the following formula at a point ( we are igoring the volume form part $\sqrt{-1}dz^{1}\wedge d\bar{z}^{1}\wedge \sqrt{-1}dz^{2}\wedge d\bar{z}^{2}\wedge \sqrt{-1}dz^{3}\wedge d\bar{z}^{3}$)
		\begin{align*}
			&(\sqrt{-1}F_{H})^{3}\\
			&=\{A_{1},A_{2},A_{3}\}-\{A_{1},B_{23},B_{23}^{*}\}-\{A_{2},B_{13},B_{13}^{*}\}-\{A_{3},B_{12},B_{12}^{*}\}+\{B_{13},B_{12}^{*},B_{23}^{*}\}\\
			&+\{B_{13}^{*},B_{12},B_{23}\},
			\end{align*}
		where $\{A,B,C\}=ABC+ACB+BAC+BCA+CAB+CBA$. 
			\end{lem}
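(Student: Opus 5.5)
We expand the cube directly, treating the matrix coefficients as non-commuting but the $2$-forms $\alpha_{ij}:=\sqrt{-1}dz^{i}\wedge d\bar z^{j}$ as even-degree forms. Even-degree forms commute with one another, so the only non-commutativity comes from the endomorphism coefficients. Write
\[
\sqrt{-1}F_{H}=\sum_{i,j}M_{ij}\,\alpha_{ij},\qquad M_{ii}=A_{i},\quad M_{ij}=B_{ij}\ (i<j),\quad M_{ji}=B_{ij}^{*}\ (i<j).
\]
Then
\[
(\sqrt{-1}F_{H})^{3}=\sum M_{i_1j_1}M_{i_2j_2}M_{i_3j_3}\,\alpha_{i_1j_1}\wedge\alpha_{i_2j_2}\wedge\alpha_{i_3j_3},
\]
where the sum runs over ordered triples of index pairs.

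The form factor is nonzero only when $(i_1,i_2,i_3)$ and $(j_1,j_2,j_3)$ are both permutations of $(1,2,3)$. So we write $i_k=\tau(k)$ and $j_k=\pi(k)$ for permutations $\tau,\pi$. The key sign computation is
\[
\alpha_{\tau(1)\pi(1)}\wedge\alpha_{\tau(2)\pi(2)}\wedge\alpha_{\tau(3)\pi(3)}=\mathrm{sgn}(\tau)\,\mathrm{sgn}(\pi)\,\mathrm{vol},
\]
where $\mathrm{vol}=\alpha_{11}\wedge\alpha_{22}\wedge\alpha_{33}$. We verify this by first reordering the $2$-forms (free, since they commute) so that the $dz$ indices are $1,2,3$. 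The product then becomes $(\sqrt{-1})^3\,dz^1 d\bar z^{\sigma(1)}dz^2 d\bar z^{\sigma(2)}dz^3 d\bar z^{\sigma(3)}$ with $\sigma=\pi\tau^{-1}$. Permuting the $d\bar z$'s among themselves, with the $dz$'s held as even blocks, costs $\mathrm{sgn}(\sigma)$.

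Next we group the terms by the permutation $\sigma$ relating each row index to its column index. Fix $\sigma$. As the ordering $\tau$ varies over all $6$ orders, the three factors $M_{1\sigma(1)},M_{2\sigma(2)},M_{3\sigma(3)}$ appear in every possible order, each with sign $\mathrm{sgn}(\sigma)$. This produces exactly $\mathrm{sgn}(\sigma)\{M_{1\sigma(1)},M_{2\sigma(2)},M_{3\sigma(3)}\}$. We then list the six permutations:
\begin{itemize}
\item The identity gives $\{A_1,A_2,A_3\}$.
\item The transposition $(23)$ gives $-\{A_1,M_{23},M_{32}\}=-\{A_1,B_{23},B_{23}^{*}\}$, and similarly for $(13)$ and $(12)$.
\item The $3$-cycle $1\to2\to3\to1$ gives $+\{B_{12},B_{23},M_{31}\}=\{B_{13}^{*},B_{12},B_{23}\}$.
\item The $3$-cycle $1\to3\to2\to1$ gives $\{M_{13},M_{21},M_{32}\}=\{B_{13},B_{12}^{*},B_{23}^{*}\}$.
\end{itemize}
Since $\{\cdot,\cdot,\cdot\}$ is symmetric, the order of the entries inside each bracket is immaterial. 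This reproduces the stated formula.

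The main obstacle is the sign bookkeeping. We must confirm that the $\sqrt{-1}$ factors contribute no extra sign, since each $\alpha_{ij}$ carries exactly one $\sqrt{-1}$, giving $(\sqrt{-1})^3$ uniformly for every term. We must also confirm that the sign depends only on $\sigma$ and not on $\tau$. Both follow from the commutativity of $2$-forms noted at the start.
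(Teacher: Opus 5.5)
Your proof is correct. The sign $\mathrm{sgn}(\sigma)$ with $\sigma=\pi\tau^{-1}$ is right (it is just the sign of permuting the three $d\bar z$ positions within a wedge of six $1$-forms, so no evenness of the $dz$'s is needed), and summing over $\tau$ for fixed $\sigma$ gives $\mathrm{sgn}(\sigma)\{M_{1\sigma(1)},M_{2\sigma(2)},M_{3\sigma(3)}\}$, whose six cases are exactly the six terms of the lemma. The paper calls this a routine calculation and omits it; your permutation-indexed expansion is that direct computation written out in full.
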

			\begin{proof}
				It is a routine calculation. So we omit it.
			\end{proof}
			Since we are interested in continuity paths, we have to consider a path of metrics. So consider a path of metrics with curvature having the following matrices:
		\begin{equation}
			A_{1}=	\begin{bmatrix}
				p(t) & 0 & 0\\
				0 & q(t) & 0\\
				0 & 0 & r(t)
			\end{bmatrix},
			A_{2}=	\begin{bmatrix}
				s(t) & 0 & 0\\
				0 & u(t) & 0\\
				0 & 0 & v(t)
			\end{bmatrix},
			A_{3}=	\begin{bmatrix}
				x(t) & 0 & 0\\
				0 & y(t) & 0\\
				0 & 0 & z(t)
			\end{bmatrix},
		\end{equation}
		\begin{equation}
			B_{12}=	\begin{bmatrix}
				0 & a(t) & 0\\
				0 & 0 & 0\\
				0 & 0 & 0
			\end{bmatrix},
			B_{13}=	\begin{bmatrix}
				0 & 0 & b(t)\\
				0 & 0 & 0\\
				0 & 0 & 0
			\end{bmatrix}, 	B_{23}=	\begin{bmatrix}
				0 & 0 & 0\\
				0 & 0 & c(t)\\
				0 & 0 & 0
			\end{bmatrix}, 
		\end{equation}
		where $p,q,r,s,u,v,x,y,z,a,b,c:[0,1]\rightarrow\mathbb{R}$ are continuous functions.  
		\begin{lem}
			With the matrices defined above, we have
		\begin{align*}
		&(\sqrt{-1}F_{H_{t}})^{3}\\
	&=
		\begin{bmatrix}
				\makecell{6psx-b^{2}(2s+v)\\-a^{2}(2x+y)+2abc} & 0 & 0\\
				0 & \makecell{6quy-c^{2}(2q+r)\\-a^{2}(x+2y)+2abc} & 0\\
				0 & 0 & \makecell{6rvz-c^{2}(q+2r)\\-b^{2}(s+2v)+2abc}
			\end{bmatrix}
			\end{align*} 
				\end{lem}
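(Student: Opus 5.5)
The plan is to substitute the explicit matrices into the formula of Lemma \ref{lemma} and evaluate each of the six symmetrised triple products $\{\cdot,\cdot,\cdot\}$ separately. We use matrix units $E_{kl}$, so that $B_{12}=aE_{12}$, $B_{13}=bE_{13}$, $B_{23}=cE_{23}$, and $B_{ij}^{*}$ is the corresponding transpose (all entries are real). The key structural fact is that the $A_i$ are diagonal, so any product $DE_{kl}D'$ with $D,D'$ diagonal equals $D_{kk}D'_{ll}E_{kl}$.

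\textbf{Diagonal term.} The $A_i$ commute, so $\{A_1,A_2,A_3\}=6A_1A_2A_3$. Its diagonal entries are $6psx$, $6quy$ and $6rvz$.

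\textbf{Terms $\{A,B,B^{*}\}$.} Take $B=cE_{23}$ as an example. Then $BB^{*}=c^2E_{22}$ and $B^{*}B=c^2E_{33}$. The six orderings group as follows:
\begin{itemize}
\item $A_1BB^{*}+BB^{*}A_1=2qc^2E_{22}$;
\item $A_1B^{*}B+B^{*}BA_1=2rc^2E_{33}$;
\item $BA_1B^{*}=c^2 rE_{22}$;
\item $B^{*}A_1B=c^2qE_{33}$.
\end{itemize}
Hence $\{A_1,B_{23},B_{23}^*\}=c^2\big((2q+r)E_{22}+(q+2r)E_{33}\big)$. The same computation gives $\{A_2,B_{13},B_{13}^*\}=b^2\big((2s+v)E_{11}+(s+2v)E_{33}\big)$ and $\{A_3,B_{12},B_{12}^*\}=a^2\big((2x+y)E_{11}+(x+2y)E_{22}\big)$. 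These match the negative terms in the claimed matrix.

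\textbf{Mixed terms.} For $\{B_{13},B_{12}^*,B_{23}^*\}$ we have $abc\,\{E_{13},E_{21},E_{32}\}$, and a product $E_{kl}E_{mn}E_{pq}$ is nonzero only if $l=m$ and $n=p$. The orderings that chain are:
\begin{itemize}
\item $E_{13}E_{32}E_{21}=E_{11}$;
\item $E_{21}E_{13}E_{32}=E_{22}$;
\item $E_{32}E_{21}E_{13}=E_{33}$.
\end{itemize}
The other three orderings vanish, so this term is $abc\,\mathrm{Id}$. Similarly, $\{B_{13}^*,B_{12},B_{23}\}=abc\{E_{31},E_{12},E_{23}\}$ also equals $abc\,\mathrm{Id}$. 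Together they contribute $2abc$ to every diagonal entry.

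Summing all contributions with the signs from Lemma \ref{lemma} gives exactly the stated diagonal matrix. The only real work is the bookkeeping of the index chains. The main point to verify is that no off-diagonal entries survive, and this holds because every term listed is a multiple of some $E_{kk}$.
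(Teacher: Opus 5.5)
Your proposal is correct and follows the paper's proof essentially step for step. You substitute into Lemma \ref{lemma} and evaluate $\{A_1,A_2,A_3\}$, the three $\{A_i,B,B^{*}\}$ terms via matrix units, and the two cyclic terms. The only difference is that the paper obtains $\{B_{13}^{*},B_{12},B_{23}\}$ as the adjoint of $\{B_{13},B_{12}^{*},B_{23}^{*}\}$, while you compute it directly; both are fine.
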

				\begin{proof}
					This follows easily from the above lemma \ref{lemma} . It is easy to see that $\{A_{1},A_{2},A_{3}\}=\begin{bmatrix}
						6psx & 0 & 0\\
						0  & 6quy & 0\\
						0 & 0 & 6rvz 
					\end{bmatrix}$. Now $B_{23}=cE_{23}$, where $E_{ij}$ is the $3\times 3$ matrix whose $(i,j)$ entry is $1$ and other entries are zero. We calculate
					\begin{align*}
						&\{A_{1},B_{23},B_{23}^{*}\}\\
						&=A_{1}(B_{23}B_{23}^{*}+B_{23}^{*}B_{23})+(B_{23}B_{23}^{*}+B_{23}^{*}B_{23})A_{1}+B_{23}A_{1}B_{23}^{*}+B_{23}^{*}A_{1}B_{23}\\
						&=c^{2}\left(A_{1}(E_{22}+E_{33})+(E_{22}+E_{33})A_{1}+rE_{22}+qE_{33}\right)\\
						&=c^{2}\left((2q+r)E_{22}+(q+2r)E_{33}\right),
					\end{align*}
					similarly we have 
					\[\{A_{2},B_{13},B_{13}^{*}\}=b^{2}\left((2s+v)E_{11}+(s+2v)E_{33}\right)\]
					and 
					\[\{A_{3},B_{12},B_{12}^{*}\}=a^{2}\left((2x+y)E_{11}+(x+2y)E_{22}\right).\]
					We calculate 
					\begin{align*}
						& \{B_{13},B_{12}^{*},B_{23}^{*}\}\\
						&=abc\left(\{E_{13},E_{21},E_{32}\}\right)\\
						&=abc(E_{13}E_{21}E_{32}+E_{13}E_{32}E_{21}+E_{21}E_{13}E_{32}+E_{21}E_{32}E_{13}+E_{32}E_{13}E_{21}+E_{32}E_{21}E_{13})\\
						&=abc(E_{11}+E_{22}+E_{33})
					\end{align*}
					We also have  $\{B_{13}^{*},B_{12},B_{23}\}=(\{B_{13},B_{12}^{*},B_{23}^{*}\})^{*}=(abc(E_{11}+E_{22}+E_{33}))^{*}=abc(E_{11}+E_{22}+E_{33})$. We are done.
				\end{proof}
 To satisfy the the vbMA equation,  we should have  
 \begin{equation}
 	\label{vb123}
 	\begin{cases}
	1)6psx-b^{2}(2s+v)-a^{2}(2x+y)+2abc=K(t)\\
	2)6quy-c^{2}(2q+r)-a^{2}(x+2y)+2abc=K(t)\\
	3)6rvz-c^{2}(q+2r)-b^{2}(s+2v)+2abc=K(t),
	\end{cases}
 \end{equation}
for some positive function $K(t)$ on $[0,1]$.\\
We define
\begin{align*}
	&p(t),s(t),x(t),y(t)=1; \\
	&q(t)=1+t\\
	&r(t)=1+2t\\
	& a(t)=b(t)=c(t)=\sqrt{3t}\\
	&v(t)=1-0.9t
\end{align*} and $u(t),z(t)$ to  be defined. To have $(\sqrt{-1}F_{H_{t}})^{3}=K(t)\mathrm{Id}$, we need to choose $u(t),z(t)$ accordingly.			
From the first equation in (\ref{vb123}),  we get
\begin{align*}
	&K(t)\\
	&=6psx-b^{2}(2s+v)-a^{2}(2x+y)+2abc\\
	&=6-18t+2.7t^{2}+6t\sqrt{3t}
\end{align*}
\begin{lem}
The function $K(t)=6-18t+2.7t^{2}+6t\sqrt{3t}$ is positive on $[0,1]$.
\end{lem}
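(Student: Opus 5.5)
The plan is to remove the square root by substituting $t=s^{2}$ with $s\in[0,1]$. It then suffices to show that the quartic
\[
f(s)=6-18s^{2}+6\sqrt{3}\,s^{3}+2.7s^{4}
\]
is positive on $[0,1]$. At the endpoints, $f(0)=6>0$ and $f(1)=6-18+6\sqrt3+2.7=6\sqrt3-9.3\approx 1.09>0$. The work is therefore in the interior, where $f$ dips to roughly $0.77$ near $s\approx 0.88$. The margin is small, so crude termwise bounds over a wide interval will not work, and I would first locate the minimum precisely.

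For this I would compute
\[
f'(s)=s\left(10.8s^{2}+18\sqrt3\,s-36\right).
\]
The quadratic factor is increasing on $[0,1]$, equals $-36$ at $s=0$ and $10.8+18\sqrt3-36>0$ at $s=1$. Hence it has exactly one root $s_{0}\in(0,1)$, and $f$ is decreasing on $[0,s_{0}]$ and increasing on $[s_{0},1]$. So $\min_{[0,1]}f=f(s_{0})$.

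Next I would trap $s_{0}$ in a short interval by evaluating the quadratic factor at two rational points. At $s=0.88$ it is $10.8(0.7744)+18\sqrt3(0.88)-36<0$, since $18\sqrt3\cdot 0.88<27.44$ using $\sqrt3<1.7321$. At $s=0.89$ it is $>0$, using $\sqrt3>1.732$. Hence $s_{0}\in[0.88,0.89]$. On this interval I would bound each term monotonically:
\[
f(s_{0})\ \ge\ 6-18(0.89)^{2}+6\sqrt3(0.88)^{3}+2.7(0.88)^{4}.
\]
With $\sqrt3>1.732$ this gives approximately $6-14.258+7.08+1.62\approx 0.44>0$, which proves the lemma.

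The main obstacle is purely numerical. The interval containing $s_{0}$ must be narrow enough that the crude bound stays positive; a width of about $0.1$ already fails. Each numerical step therefore needs rational bounds on $\sqrt3$ and careful rounding in the safe direction. If the margin at width $0.01$ turned out too thin, I would subdivide $[0.88,0.89]$ further. Alternatively, I would use the relation $10.8s_{0}^{2}=36-18\sqrt3 s_{0}$ to eliminate the $s_{0}^{4}$ term, which gives $f(s_{0})=6-9s_{0}^{2}+1.5\sqrt3\,s_{0}^{3}$, a cubic that is easier to bound on the short interval.
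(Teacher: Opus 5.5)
Your proof is correct and follows the same route as the paper: substitute $s=\sqrt{t}$, observe that the derivative reduces to a quadratic with exactly one root $s_0\in(0,1)$ (so $K$ decreases and then increases), and check positivity at that critical point. The one difference is that you certify the minimum rigorously, by enclosing $s_0$ in $[0.88,0.89]$ using rational bounds on $\sqrt{3}$ and then bounding each term monotonically to get about $0.44>0$, whereas the paper writes down the root explicitly and only asserts $K(t_0)\approx 0.76$ numerically.
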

\begin{proof}
	
	We take the derivative  
	\[K'(t)=-18+5.4t+9\sqrt{3}\sqrt{t}.\]
	Now $K'(0)=-18<0$ and $K'(1)=-12.6+9\sqrt{3}>0$. So $K(t)$ starts decresing and then increases. So there is a minimum. If we put $u=\sqrt{t}$ then the above becomes 
	\[-18+9\sqrt{3}u+5.4u^{2}.\]
	The root in $[0,1]$ is $u_{0}=\frac{-9\sqrt{3}+\sqrt{243+4\times 18\times 5.4}}{10.8}=\frac{-9\sqrt{3}+\sqrt{631.8}}{10.8}$. So we take $t_{0}=u_{0}^{2}$. But at this point $K(t_{0})\approx 0.76>0$. Hence we are done.
\end{proof}
 Now we are ready to define $u(t),z(t)$. From the second equation in (\ref{vb123}), we get  
\begin{align*}
&6quy-c^{2}(2q+r)-a^{2}(x+2y)+2abc=K(t)\\
&\implies 6(1+t)u(t)\times 1-3t(2(1+t)+1+2t)-3t(1+2\times 1)+6t\sqrt{3t}=6-18t+2.7t^{2}+6t\sqrt{3t}\\
&\implies u(t)=\frac{6+14.7t^{2}}{6(1+t)}
\end{align*}		
 This gives us $u(t)$. Now from the third equation in (\ref{vb123}), we get
 \begin{align*}
 	&6rvz-c^{2}(q+2r)-b^{2}(s+2v)+2abc=K(t)\\
 	&\implies 6(1+2t)(1-0.9t)z(t)-3t(1+t+2(1+2t))-3t(1+2(1-0.9t))+6t\sqrt{3t}\\
 	&=6-18t+2.7t^{2}+6t\sqrt{3t}\\
 	&\implies z(t)=\frac{6+12.3t^{2}}{6(1+2t)(1-0.9t)}.
 \end{align*}
 This gives us $z(t)$.\\
	The following are easy to check.\\
$(1)$ At $t=0$, $A_{i}=\mathrm{Id}$ and $B_{ij}=0$.\\
$(2)$ $p,q,r,s,u,v,x,y,z$ stays positive on $[0,1]$ and $p(1)\ne q(1)$.\\
$(3)$ $(\sqrt{-1}F_{H_{t}})^{3}=K(t)\mathrm{Id}$ by construction and $K(t)>0$ on $[0,1]$.\\
$(4)$ $3p(1)x(1)-b(1)^{2}=0$. This will prove that condition $E$ for vbMA equation does not hold at $t=1$.  


Now let us explain how we derived the explicit condition $3p(1)x(1)-b(1)^{2}=0$. First, let us recall the condition $E$ for vbMA equation: at a point $p\in X$ and for a metric $H$, if for all endomorphism valued $(1,0)$ form of the type $\xi=a\wedge g$ where $0\ne a\in\Omega^{(1,0)}(X_{p})$ and $0\ne g\in End(E_{p})$, we have 
\[\mathrm{Tr}\left(\sum_{j=0}^{n-1}\sqrt{-1}\xi\wedge(\sqrt{-1}F_{H}(p))^{j}\wedge \xi^{*}\wedge(\sqrt{-1}F_{H}(p))^{n-1-j}\right)>0.\] 
 We take the curvature at a point as above and $a=dz^{2}$ and $g=E_{11}$.
	Now let us calculate the condition $E$ for the vbMA equation. 
	\begin{align*}
		&\mathrm{Tr}\left(\sum_{j=0}^{2}\sqrt{-1}ag\wedge(\sqrt{-1}F_{H})^{j}\wedge \bar{a}g^{*}\wedge(\sqrt{-1}F_{H})^{2-j}\right)\\
		&=\mathrm{Tr}\left(\sqrt{-1}a\wedge\bar{a}gg^{*}\wedge(\sqrt{-1}F_{H})^{2}+\sqrt{-1}a\wedge\bar{a}g^{*}g\wedge(\sqrt{-1}F_{H})^{2}\right)\\
		&+\mathrm{Tr}\left(\sqrt{-1}a\wedge \bar{a}g\wedge(\sqrt{-1}F_{H})\wedge g^{*}\wedge\sqrt{-1}F_{H}\right).
	\end{align*}
Since $g=E_{11}$, we have 
\begin{align*}
	&=2(2px-b^{2})+2px\\
	&=2(3px-b^{2})
\end{align*}

	This explains the $3px-b^{2}=0$ condition at $t=1$. Now we prove theorem \ref{thmvbma}.
\begin{thm}[$=$ Theorem \ref{thmvbma}]
	The ellipticity is not preserved for the vector bundle Monge-Amp\`ere equation at a point along continuity paths in the connected component of trivial solution for dimension $\ge 3$ and rank $\ge 3$.
\end{thm}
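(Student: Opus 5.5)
The plan is to assemble the explicit path already built in this section and then argue by contradiction using Lemma \ref{Eimpliesellipticity} and Lemma \ref{ellipticityimpliesE}.

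\textbf{Base case $\dim(X)=3$, $\mathrm{rank}(E)=3$.} Work at a fixed point $p$ and take the path of curvature forms $\sqrt{-1}F_{H_t}$ given by the matrices $A_i(t)$, $B_{ij}(t)$ above. For this we use the standard fact that any $(1,1)$-form with Hermitian-type coefficients of this shape can be realised as the curvature at $p$ of a metric, for example $H=\mathrm{Id}-\sum (\text{coefficients}) z^i\bar z^j+O(|z|^3)$. This realisation can be chosen to depend continuously on $t$.

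\begin{enumerate}
\item At $t=0$ we have $A_i=\mathrm{Id}$ and $B_{ij}=0$. This is the trivial solution with $\rho=1$, which satisfies condition $E$ by the preceding lemma.
\item By items $(1)$--$(3)$, $(\sqrt{-1}F_{H_t})^3=K(t)\mathrm{Id}$ with $K>0$. Hence, taking $\eta_t=K(t)\,\cdot$vol, the path solves the vbMA equation at $p$ for every $t$ and stays in the connected component of the trivial solution.
\item At $t=1$, take $\xi=dz^2\otimes E_{11}$. The computation above gives the condition $E$ quantity $2(3p(1)x(1)-b(1)^2)=2(3-3)=0$, so condition $E$ fails at $t=1$.
\end{enumerate}

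\textbf{Contradiction.} Suppose ellipticity were preserved along the path. Since condition $E$ holds at $t=0$, Lemma \ref{ellipticityimpliesE} forces condition $E$ to hold for all $t\in[0,1]$, including $t=1$. This contradicts step 3, so ellipticity is not preserved.

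Alternatively, one can see the failure of ellipticity directly. The quantity is a Hermitian form in $g$, positive definite on $[0,1)$, and it acquires a zero at some first time $t_0\le 1$. At $t_0$ the argument in Lemma \ref{ellipticityimpliesE} produces a nonzero $\Psi$ in the kernel of the symbol.

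\textbf{Higher dimension and rank.} For $\dim X=n\ge 3$ and $\mathrm{rank} E=m\ge 3$, we extend the example block-diagonally. The bundle part is $E_p=\mathbb{C}^3\oplus\mathbb{C}^{m-3}$, where the curvature acts on the extra summand as $\lambda(t)\mathrm{Id}\sum_{i}\sqrt{-1}dz^i\wedge d\bar z^i$. The base part is $T_p^*X$, where the extra directions $z^4,\dots,z^n$ carry $\mu(t)\mathrm{Id}_E\,\sqrt{-1}dz^i\wedge d\bar z^i$ with $\mu$ constant.

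Computing $(\sqrt{-1}F)^n$, the extra diagonal terms commute with everything and factor out. This gives a multiple $\mu^{n-3}\cdot n!/3!$ of the $3\times3$ quantity on the first block. On the second block we get $\lambda^3\mu^{n-3}$ times the same constant, and we choose $\lambda(t)$ so that this equals $K(t)$, with $\lambda(0)=1$. Condition $E$ at $t=1$ with $\xi=dz^2\otimes E_{11}$ then reduces to the same positive multiple of $3px-b^2=0$, and the argument goes through as before.

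The main obstacle is this reduction. Mixed terms in the sum over $j$ that involve the $dz^k$ directions with $k\ge4$ must either factor cleanly or contribute only nonnegative amounts that vanish for this $\xi$. One has to verify that $\xi$ living purely in $dz^2$ and the first block kills them, or otherwise perturb the $t=1$ data slightly so that the quantity becomes zero or negative.
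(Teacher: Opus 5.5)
Your overall strategy is the same as the paper's: the same explicit rank-$3$, dimension-$3$ path, the same contradiction through Lemma \ref{Eimpliesellipticity} and Lemma \ref{ellipticityimpliesE}, and the same extension by central identity blocks. The gap is the step you yourself call the main obstacle. For $n>3$ you assert that condition $E$ with $\xi=dz^2\otimes E_{11}$ reduces to a positive multiple of $3px-b^2$, but you do not verify it. You also hedge with a perturbation fallback, which would itself need justification, since perturbed data must still solve the equation. Because the base case is constructed before the theorem, this reduction is essentially the whole content of the paper's proof of the statement.

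The missing observation is that no problematic mixed terms occur. Write $\sqrt{-1}F_H=\mathcal{A}'+\mu\,\mathrm{Id}_E\,\Omega$, where $\Omega=\sum_{l\ge 4}\sqrt{-1}dz^l\wedge d\bar z^l$ and $\mathcal{A}'$ (including your $\lambda$-block) involves only $dz^1,dz^2,dz^3$. The condition $E$ quantity for this $\xi$ is the coefficient of $m$ in $\mathrm{Tr}\big(E_{11}(\sqrt{-1}F_H+mE_{11}\sqrt{-1}dz^2\wedge d\bar z^2)^n\big)$. Since $\mu\,\mathrm{Id}_E\,\Omega$ is central, the binomial theorem gives
\[
\big(\sqrt{-1}F_H+mE_{11}\sqrt{-1}dz^2\wedge d\bar z^2\big)^n=\sum_{k}\binom{n}{k}\big(\mathcal{A}'+mE_{11}\sqrt{-1}dz^2\wedge d\bar z^2\big)^k\wedge(\mu\Omega)^{n-k}.
\]
Only $k=3$ survives. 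The $k$-th power of the first factor vanishes for $k\ge 4$, because it lives in three complex directions, and $\Omega^{n-k}=0$ for $k\le 2$. Block-diagonality of $\mathcal{A}'$ means $E_{11}$ never sees the extra bundle summand. Hence the quantity is exactly $\binom{n}{3}(n-3)!\,\mu^{n-3}\cdot 2(3px-b^2)=\frac{n!}{3}\mu^{n-3}(3px-b^2)$, which vanishes at $t=1$. Equivalently, expanding term by term, every pair $(a,b)$ with $a+b=2$ carries the same weight $\sum_{j=0}^{n-1}\binom{j}{a}\binom{n-1-j}{b}=\binom{n}{3}$.

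Two smaller corrections.
\begin{itemize}
\item The extra bundle block contributes $n!\,\lambda^3\mu^{n-3}$, not $\frac{n!}{3!}\lambda^3\mu^{n-3}$. The first block gives $\frac{n!}{3!}K(t)\mu^{n-3}$, so you need $\lambda^3=K(t)/6$. This is what actually gives $\lambda(0)=1$, since $K(0)=6$. After the paper's normalisation by $K^{1/3}$ it becomes the paper's $6^{-1/3}$.
\item In your ``alternative'' argument, positivity of condition $E$ on all of $[0,1)$ is not established. It is also not needed: the first failure time $t_0\in(0,1]$ suffices, exactly as in Lemma \ref{ellipticityimpliesE}.
\end{itemize}
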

\begin{proof}
	Our previous counter example is the case dimension $=3$ and rank $=3$. We will generalise this. So previously we had 
	\[(\sqrt{-1}F_{H})^{3}=K(t)\mathrm{Id}.\] Observe that we can divide all the matrices by $K(t)^{\frac{1}{3}}$ and get 
	\[(\sqrt{-1}F_{H})^{3}=\mathrm{Id}.\] Here we are misusing notation and writing $\sqrt{-1}F_{H}$(even after divison by $K(t)^{\frac{1}{3}}$). First, we generalise to the case dimension $=3$ and rank $\ge 3$. This is easily achieved by exending the matrices $A_{i}$ and $B_{ij}$ in the following manner.
	\[A_{i}'=\begin{bmatrix}
		A_{i} & 0\\
		0 & \frac{1}{6^{\frac{1}{3}}}\mathrm{Id}
	\end{bmatrix}, B_{ij}'=\begin{bmatrix}
	B_{ij} & 0\\
	0 & 0
	\end{bmatrix},\]
	where the matrices are written in a block diagonal way. Now we are ready to generalise this to higher dimension. For this we write 
		\begin{align*}
			&\sqrt{-1}F_{H}\\
			&=\sum_{i=1}^{3}A_{i}'\sqrt{-1}dz^{i}\wedge d\bar{z}^{i}+\sum_{1\le i<j\le3}B_{ij}'\sqrt{-1}dz^{i}\wedge d\bar{z}^{j}+\sum_{1\le i<j\le3}B_{ij}'^{*}\sqrt{-1}dz^{j}\wedge d\bar{z}^{i}+\sum_{l=1}^{k}\mathrm{Id}\sqrt{-1}dz^{l+3}\wedge d\bar{z}^{l+3}\\
			&=\mathcal{A}+\sum_{l=1}^{k}\mathrm{Id}\sqrt{-1}dz^{l+3}\wedge d\bar{z}^{l+3}
				\end{align*}
		 Observe that 
			\[(\sqrt{-1}F_{H})^{k+3}=\binom{k+3}{3}\mathcal{A}^{3}(\sum_{l=1}^{k}\sqrt{-1}dz^{l+3}\wedge d\bar{z}^{l+3})^{k}=\binom{k+3}{3}k!\mathrm{Id}\]
			for any $k$. Hence vector bundle Monge-Amp\`ere equation is satisfied in any dimension (albeit the constant $\binom{k+3}{3}k!$ will vary but can be divided). Now we only need to show that the condition $E$ also stays the same. For this we take $\xi=dz^{2}E_{11}$. Now
			\begin{align*}
				&\sum_{j=0}^{k+2}\mathrm{Tr}\left(\sqrt{-1}\xi\wedge (\sqrt{-1}F_{H})^{j}\wedge \xi^{*}\wedge(\sqrt{-1}F_{H})^{k+2-j}\right)\\
				&=\sum_{j=0}^{k+2}\mathrm{Tr}\left(E_{11} (\sqrt{-1}F_{H})^{j}\wedge E_{11}\wedge(\sqrt{-1}F_{H})^{k+2-j}\right)\wedge\sqrt{-1}dz^{2}\wedge d\bar{z}^{2}.
			\end{align*}
		This can be seen as the coefficient of $m$  in
			\begin{align*}	
			&\mathrm{Tr}\left(E_{11}\big(\sqrt{-1}F_{H}+mE_{11}\sqrt{-1}dz^{2}\wedge d\bar{z}^{2}\big)^{k+3}\right)\\
			&=\mathrm{Tr}\big(E_{11}(\sqrt{-1}F_{H})^{k+3}\big)+m\mathrm{Tr}\left(E_{11}\sum_{j=0}^{k+2}(\sqrt{-1}F_{H})^{j}E_{11}(\sqrt{-1}F_{H})^{k+2-j}\right)\wedge\sqrt{-1}dz^{2}\wedge d\bar{z}^{2}.
		\end{align*}
		Now let us compute the coefficient of $m$:
		\begin{align*}
			&\mathrm{Tr}\left(E_{11}\big(\sqrt{-1}F_{H}+mE_{11}\sqrt{-1}dz^{2}\wedge d\bar{z}^{2}\big)^{k+3}\right)\\
			&=\mathrm{Tr}\left(E_{11}\big((\mathcal{A}+mE_{11}\sqrt{-1}dz^{2}\wedge d\bar{z}^{2})+\sum_{l=1}^{k}\mathrm{Id}\sqrt{-1}dz^{l+3}\wedge d\bar{z}^{l+3}\big)^{k+3}\right)\\
			&=\mathrm{Tr}\left(\binom{k+3}{3}E_{11}\big(\mathcal{A}+mE_{11}\sqrt{-1}dz^{2}\wedge d\bar{z}^{2}\big)^{3}\big(\sum_{l=1}^{k}\sqrt{-1}dz^{l+3}\wedge d\bar{z}^{l+3}\big)^{k}\right).
		\end{align*}
		The coefficient of $m$  in $\mathrm{Tr}\left(E_{11}\big(\mathcal{A}+mE_{11}\sqrt{-1}dz^{2}\wedge d\bar{z}^{2}\big)^{3}\right)$ is $2(3px-b^{2})$. Indeed 
		\begin{align*}
			&\mathrm{Tr}\left(E_{11}\big(\mathcal{A}+mE_{11}\sqrt{-1}dz^{2}\wedge d\bar{z}^{2}\big)^{3}\right)\\
			&=\mathrm{Tr}\big(E_{11}\mathcal{A}^{3}\big)+m \mathrm{Tr}\left(E_{11}\sum_{j=0}^{2}\mathcal{A}^{j}E_{11}\mathcal{A}^{2-j}\right)\\
			&=\mathrm{Tr}\big(E_{11}\mathcal{A}^{3}\big)+m\sum_{j=0}^{2} \mathrm{Tr}\left(E_{11}\mathcal{A}^{j}E_{11}\mathcal{A}^{2-j}\right).
		\end{align*}	
		Hence the condition $E$ for the vbMA equation becomes $\frac{(k+3)!}{3}(3px-b^{2})$.

\end{proof}

\section{Counter Example For the $\sigma_{k}$ Equations when $k\ge3$}
\label{sectionsigmak}
In this section we will show that the ellipticity is not preserved for $\sigma_{k}$ equations when $k\ge 3$. We will use the counter example for vector bundle Monge-Amp\`ere equation in dimension $k$ and rank $3$. So we have $\sqrt{-1}F_{H}=\sum_{i=1}^{k}A_{i}\sqrt{-1}dz^{i}\wedge d\bar{z}^{i}+\sum_{1=i<j=k}B_{ij}\sqrt{-1}dz^{i}\wedge d\bar{z}^{j}+\sum_{1=i<j=k}B_{ij}^{*}\sqrt{-1}dz^{j}\wedge d\bar{z}^{i}$ and $\omega=\sum_{i=1}^{k}\sqrt{-1}dz^{i}\wedge d\bar{z}^{i}$. They satisfy $(\sqrt{-1}F_{H})^{k}=\mathrm{Id}$ and  $3p(1)x(1)-b(1)^{2}=0$. Here we have ignored the volume form on the right.\\ Now the $\sigma_{k}$ equations for $k\ge 3$ is given by 
\[(\sqrt{-1}F_{H})^{k}\wedge\omega^{n-k}=\eta \mathrm{Id}.\]
The $\sigma_{k}$ equation in dimension $k$ is same as the vector bundle Monge-Amp\`ere equation. So we will assume $n\ge k+1$. Now let us take
\[\sqrt{-1}F_{H}=\sum_{i=1}^{k}A_{i}\sqrt{-1}dz^{i}\wedge d\bar{z}^{i}+\sum_{1\le i<j\le k}B_{ij}\sqrt{-1}dz^{i}\wedge d\bar{z}^{j}+\sum_{1\le i<j\le k}B_{ij}^{*}\sqrt{-1}dz^{j}\wedge d\bar{z}^{i}+\epsilon \sum_{l=1}^{n-k}\mathrm{Id}\sqrt{-1}dz^{l+k}\wedge d\bar{z}^{l+k}\]
and 
\[\omega=\epsilon\sum_{i=1}^{k}\sqrt{-1}dz^{i}\wedge d\bar{z}^{i}+\sum_{l=1}^{n-k}\sqrt{-1}dz^{l+k}\wedge d\bar{z}^{l+k},\]
where $\epsilon>0$ is small and to be chosen later. \\
First let us  calculate the  condition $E$ for $\sigma_{k}$ equation. For this we take $\xi=dz^{2}E_{11}$. Now
\[\sum_{j=0}^{k-1}\mathrm{Tr}\left(\sqrt{-1}\xi\wedge(\sqrt{-1}F_{H})^{j}\wedge\xi^{*}\wedge(\sqrt{-1}F_{H})^{k-1-j}\right)\wedge\omega^{n-k}\] 
becomes 
\[\frac{(k+3)!}{3}(n-k)!(3px-b^{2})+b^{2}G(\epsilon)+H(p,x,\epsilon),\]	where for $\epsilon=0$, $G(\epsilon=0)=0=H(p,x,\epsilon=0)$. Misusing notation we write 
\[\frac{(k+3)!(n-k)!}{3}\left((3px-b^{2})+b^{2}G(\epsilon)+H(p,x,\epsilon)\right).\]
As before we choose $p=x=1$ for all $t\in [0,1]$ and $b=\sqrt{
	\frac{(3+H(1,1,\epsilon))t}{1-G(\epsilon)}}$ for small $\epsilon$. This ensures that the condition $E$ for $\sigma_{k}$ is violated at $t=1$. Next we will show that we can solve the $\sigma_{k}$ equation. 	Now for this we write 
\[\sqrt{-1}F_{H}=\mathcal{A}+\epsilon\sum_{l=1}^{n-k}\mathrm{Id}\sqrt{-1}dz^{l+k}\wedge d\bar{z}^{l+k}\]
and 
\[\omega=\epsilon\sum_{i=1}^{k}\sqrt{-1}dz^{i}\wedge d\bar{z}^{i}+\mathcal{B}.\]	
We calculate 
\begin{align*}
	&(\sqrt{-1}F_{H})^{k}\wedge\omega^{n-k}\\
	&=(\mathcal{A}+\epsilon\sum_{l=1}^{n-k}\mathrm{Id}\sqrt{-1}dz^{l+k}\wedge d\bar{z}^{l+k})^{k}\wedge(\epsilon\sum_{i=1}^{k}\sqrt{-1}dz^{i}\wedge d\bar{z}^{i}+\mathcal{B})^{n-k}\\
	&=\mathcal{A}^{k}\wedge\mathcal{B}^{n-k}+\mathcal{D}_{\epsilon},
\end{align*}
where $\mathcal{D}_{\epsilon}$ is a diagonal matrix with $\mathcal{D}_{\epsilon,11}$ is a function of $p,s,x,v,y,a,b,c,\epsilon$; $\mathcal{D}_{\epsilon,22}$ is a function of $q,u,y,r,x,a,b,c,\epsilon$; $\mathcal{D}_{\epsilon,33}$ is a function of $r,v,z,q,s,a,b,c,\epsilon$ and for $\epsilon=0$, $\mathcal{D}_{\epsilon}$ is the zero matrix. Now the above will give the following three expressions
\begin{align*}
	&1)	\alpha:=\binom{k}{3}(k-3)!(n-k)!\left(6psx-b^{2}(2s+v)-a^{2}(2x+y)+2abc\right)+D_{\epsilon,11}\\
	&2) \beta:=\binom{k}{3}(k-3)!(n-k)!\left(6quy-c^{2}(2q+r)-a^{2}(x+2y)+2abc\right)+D_{\epsilon,22}\\
	&3) \gamma:=\binom{k}{3}(k-3)!(n-k)!\left(6rvz-c^{2}(q+2r)-b^{2}(s+2v)+2abc\right)+D_{\epsilon,33}.
\end{align*}	
 We have already defined $p,x,b$. To set up for the implicit function theorem, we need to define seven more functions as the above three expressions will only give two independent equations. So we define 
\begin{align*}
	&s,y=1\\
	& q=1+t,r=1+2t,v=1-0.9t\\
	&c=a=\sqrt{
		\frac{(3+H(1,1,\epsilon))t}{1-G(\epsilon)}}.
\end{align*} 
Now let us define 
\[T:\mathbb{R}\times\mathbb{R}^{2}\rightarrow\mathbb{R}^{2}\]
\[(\epsilon,u,z)=(\beta-\alpha,\gamma-\alpha).\]
Now we calculate at $\epsilon=0$
\[\begin{bmatrix}
	\frac{\partial(\beta-\alpha)}{\partial u} & \frac{\partial(\beta-\alpha)}{\partial z}\\
	\frac{\partial(\gamma-\alpha)}{\partial u} & \frac{\partial(\gamma-\alpha)}{\partial z}\\
\end{bmatrix}=\binom{k}{3}(k-3)!(n-k)!\begin{bmatrix}
	6qy & 0\\
	0 & 6rv
\end{bmatrix}.\]
The determinant is $\{\binom{k}{3}(k-3)!(n-k)!\}^{2}36qyrv=\{\binom{k}{3}(k-3)!(n-k)!\}^{2}36(1+t)\times 1\times (1+2t)\times (1-0.9t)$ which is non-zero for all $t\in[0,1]$. So we have solution for small $\epsilon$ and this will only give $\alpha=\beta=\gamma$. We can make sure that $\alpha=\beta=\gamma>0$ by choosing smaller $\epsilon$ if needed since at $\epsilon=0$ we have positivity. Hence we are done. This completes the rank three case in every possible dimensions. The higher rank is an easy extension of this like the vector bundle Monge-Amp\`ere equation. This proves the first part of theorem \ref{thmsigma}.

\section{Ellipticity Preservation of the $\sigma_{2}$ Equation}	
\label{sectionsigma2}		
In this section, we prove that the equation 
\[(\sqrt{-1}F_{H})^{2}\wedge \omega^{n-2}=\eta\mathrm{Id}\]
preserves ellipticity in any dimension. 	\\		
Now at a point, the curvature of a metric is of the form
\[\sqrt{-1}F_{H_{0}}=\sum_{i=1}^{n}A_{i}\sqrt{-1}dz^{i}\wedge d\bar{z}^{i}+\sum_{i<j}B_{ij}\sqrt{-1}dz^{i}\wedge d\bar{z}^{j}+\sum_{i<j}B_{ij}^{*}\sqrt{-1}dz^{j}\wedge d\bar{z}^{i},\]	
 the K\"ahler form
\[\omega=\sum_{i=1}^{n}\sqrt{-1}dz^{i}\wedge d\bar{z}^{i},\]
and the volume form 
\[\eta=C\sqrt{-1}dz^{1}\wedge d\bar{z}^{1}\dots\sqrt{-1}dz^{n}\wedge d\bar{z}^{n}\]
for some positive constant $C$.	
Then after routine calculations, we get	(there is a positive constant but we are ignoring it)
\[\frac{(\sqrt{-1}F_{H_{0}})^{2}\wedge\omega^{n-2}}{\eta}=\sum_{i<j} (A_{i}A_{j}+A_{j}A_{i}- B_{ij}B_{ij}^{*}-B_{ij}^{*}B_{ij})\]	
Now suppose $\sqrt{-1}F_{H_{0}}$ satisfies the $\sigma_{2}$ equation. Then we have 
\begin{align*}
	&\sum_{i<j} (A_{i}A_{j}+A_{j}A_{i}- B_{ij}B_{ij}^{*}-B_{ij}^{*}B_{ij})=\mathrm{Id}>0\\
	&\implies\sum_{i<j} (A_{i}A_{j}+A_{j}A_{i})>\sum_{i<j} (B_{ij}B_{ij}^{*}+B_{ij}^{*}B_{ij})\\
	&\implies\sum_{i<j}(A_{i}A_{j}+A_{j}A_{i})>0
\end{align*}	
The last line follows because for any $B_{ij}$, we know $(B_{ij}B_{ij}^{*}+B_{ij}^{*}B_{ij})\ge 0$. The inequalities are in terms of positive definiteness or positive semi-definiteness.	\\	
The condition $E$ for $\sigma_{2}$ equation is that for all non-zero $\xi$, we have 
\[\mathrm{Tr}\left(\sum_{j=0}^{1}\sqrt{-1}\xi\wedge (\sqrt{-1}F_{H})^{j}\wedge \xi^{*}\wedge(\sqrt{-1}F_{H})^{1-j}\wedge\omega^{n-2}\right)>0.\]
Without loss of generality we can choose $\xi=dz^{k}g$, then this condition translates to(ignoring the volume form part)
\[Tr\left((gg^{*}+g^{*}g)\sum_{i\ne k}A_{i}\right)>0\]	
\begin{lem}
Condition $E$ for the $\sigma_{2}$ equation with $\xi=dz^{k}g$ $\iff$ $\sum_{i\ne k}A_{i}$ is positive definite.
\end{lem}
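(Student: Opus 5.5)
The plan is to treat the statement as a purely linear-algebraic fact about the Hermitian matrix
\[S_{k}:=\sum_{i\ne k}A_{i}.\]
I take as given the reduction stated just before the lemma: for $\xi=dz^{k}g$, condition $E$ for the $\sigma_{2}$ equation reads $\mathrm{Tr}\big((gg^{*}+g^{*}g)S_{k}\big)>0$ for all $0\ne g\in End(E_{p})$. Since each $A_{i}$ is Hermitian (it is a diagonal block of $\sqrt{-1}F_{H}$), $S_{k}$ is Hermitian. The two implications then come from two standard facts: the trace pairing of a nonzero positive semidefinite matrix with a positive definite matrix is positive, and rank-one projections detect positivity.

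For ``$\Leftarrow$'', I first note that $P:=gg^{*}+g^{*}g$ is positive semidefinite. It is also nonzero, because $\mathrm{Tr}(P)=2\mathrm{Tr}(gg^{*})=2|g|^{2}>0$ for $g\ne 0$. If $S_{k}>0$, write $S_{k}=S_{k}^{1/2}S_{k}^{1/2}$ with $S_{k}^{1/2}$ Hermitian and invertible. Then
\[\mathrm{Tr}(PS_{k})=\mathrm{Tr}\big(S_{k}^{1/2}PS_{k}^{1/2}\big).\]
The matrix $S_{k}^{1/2}PS_{k}^{1/2}$ is positive semidefinite, and it is nonzero because $S_{k}^{1/2}$ is invertible and $P\ne 0$. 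Hence its trace is strictly positive. Equivalently, I can diagonalise $P=\sum_{l}\lambda_{l}w_{l}w_{l}^{*}$ with $\lambda_{l}\ge 0$, not all zero, and get $\mathrm{Tr}(PS_{k})=\sum_{l}\lambda_{l}\,w_{l}^{*}S_{k}w_{l}>0$.

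For ``$\Rightarrow$'', I test the condition on rank-one projections. Given a unit vector $v\in E_{p}$, set $g=vv^{*}$, which is nonzero and Hermitian with $g^{2}=g$. Then $gg^{*}+g^{*}g=2vv^{*}$, so the condition gives
\[2\,\mathrm{Tr}(vv^{*}S_{k})=2\,v^{*}S_{k}v>0.\]
Since $v$ is arbitrary, the Hermitian matrix $S_{k}$ is positive definite.

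There is no real obstacle in either direction. The only point needing care is the one already made before the lemma: reducing condition $E$ with $\xi=dz^{k}g$ to the trace expression, that is, checking that the wedge with $\omega^{n-2}$ and $\sqrt{-1}dz^{k}\wedge d\bar z^{k}$ picks out exactly $\sum_{i\ne k}A_{i}$, with a positive combinatorial constant and with the off-diagonal $B_{ij}$ terms dropping out. I would briefly re-verify this by observing that, after wedging with $\sqrt{-1}dz^{k}\wedge d\bar z^{k}$ and $\omega^{n-2}$, only diagonal components $\sqrt{-1}dz^{i}\wedge d\bar z^{i}$ with $i\ne k$ of $\sqrt{-1}F_{H}$ survive to form a top-degree form. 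The two orderings $j=0,1$ produce $g^{*}g$ and $gg^{*}$ respectively, after using cyclicity of the trace.
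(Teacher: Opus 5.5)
Your proposal is correct and follows the paper's route. For $\Rightarrow$, the paper takes $g=\frac{1}{\sqrt{2}}H^{1/2}$ with $H=xx^{*}$, which is your rank-one choice $g=vv^{*}$ up to scaling. For $\Leftarrow$, the paper only says it is ``easy to see'', and your argument via $\mathrm{Tr}(S_{k}^{1/2}PS_{k}^{1/2})>0$ supplies that omitted detail.
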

It is easy to see from the above discussion 	that if $\sum_{i\ne k}A_{i}$  is positive definite for all $k$, then Condition $E$ for the $\sigma_{2}$-equation holds. Note that preservation of this condition would imply condition $E$ is preserved. Now we prove the reverse direction. Suppose condition $E$ holds then for any $k$ and non-zero matrix $g$ we have 
\[\mathrm{Tr}\left((gg^{*}+g^{*}g)\sum_{i\ne k}A_{i}\right)>0.\]
In particular for any positive semidefinite matrix $H$, we take $g=\frac{1}{\sqrt{2}}H^{\frac{1}{2}}=g^{*}$. Then the above gives for any positive semidefinite matrix $H$, we have 
\[\mathrm{Tr}\left(H(\sum_{i\ne k}A_{i})\right)=\mathrm{Tr}\left((\sum_{i\ne k}A_{i})H\right)>0.\]
Now for any non-zero vector $x$, we take the positive semidefinite matrix $H=xx^{*}$. So we have 
\begin{align*}
	& \mathrm{Tr}\left((\sum_{i\ne k}A_{i})xx^{*}\right)>0\\
	&\implies\mathrm{Tr}\left(x^{*}(\sum_{i\ne k}A_{i})x\right)>0
\end{align*}
This gives the desired result.
\begin{lem}
	Suppose $H_{t}$ is a path of metrics such that $H_{0}$ satisfies the condition : $\sum_{i\ne k}A_{i,t=0}$ is positive definite for each $k$ and $\frac{(\sqrt{-1}F_{H_{t}})^{2}\wedge \omega^{n-2}}{\eta}$ is positive defiite for all $t$ then $\sum_{i\ne k}A_{i,t}$ is positive definite for each $k$. 
\end{lem}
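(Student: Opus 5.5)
The plan is a continuity argument in $t$, as in Lemma \ref{ellipticityimpliesE}, using an algebraic identity for the $\sigma_{2}$ expression. Write $S_{k,t}=\sum_{i\ne k}A_{i,t}$. By assumption each $S_{k,0}$ is positive definite. The metrics $H_{t}$ vary continuously, so the $A_{i,t}$ vary continuously, and hence so do the smallest eigenvalues of the $S_{k,t}$. Suppose the conclusion fails. Then there is a first time $t_{0}>0$ and an index $k$ such that $S_{k,t_{0}}$ is positive semidefinite but not positive definite. So there is a nonzero vector $x$ with $S_{k,t_{0}}x=0$. I will derive a contradiction with the positivity of $\frac{(\sqrt{-1}F_{H_{t_{0}}})^{2}\wedge\omega^{n-2}}{\eta}$.

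The key step is to rewrite the $\sigma_{2}$ expression computed above by splitting off the pairs that contain the index $k$. Drop the $t_{0}$ subscript. Then
\begin{align*}
\sum_{i<j}(A_{i}A_{j}+A_{j}A_{i})&=A_{k}S_{k}+S_{k}A_{k}+\sum_{\substack{i<j\\ i,j\ne k}}(A_{i}A_{j}+A_{j}A_{i})\\
&=A_{k}S_{k}+S_{k}A_{k}+S_{k}^{2}-\sum_{i\ne k}A_{i}^{2}.
\end{align*}
The second line uses $S_{k}^{2}=\sum_{i\ne k}A_{i}^{2}+\sum_{i<j,\,i,j\ne k}(A_{i}A_{j}+A_{j}A_{i})$. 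Now evaluate this quadratic form at $x$. Since $S_{k}x=0$ and $S_{k}$ is Hermitian, the terms $x^{*}A_{k}S_{k}x$, $x^{*}S_{k}A_{k}x=(S_{k}x)^{*}A_{k}x$ and $x^{*}S_{k}^{2}x=|S_{k}x|^{2}$ all vanish. Therefore
\[
x^{*}\Big(\sum_{i<j}(A_{i}A_{j}+A_{j}A_{i})\Big)x=-\sum_{i\ne k}|A_{i}x|^{2}\le 0 .
\]

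On the other hand, positivity of the $\sigma_{2}$ expression at $t_{0}$ means
\[
\sum_{i<j}(A_{i}A_{j}+A_{j}A_{i})>\sum_{i<j}(B_{ij}B_{ij}^{*}+B_{ij}^{*}B_{ij})\ge 0 .
\]
So the left-hand quadratic form is strictly positive at $x\neq 0$, which contradicts the previous display. Hence no such $t_{0}$ exists, and every $S_{k,t}$ stays positive definite on the whole path. Combined with the previous lemma, this shows that condition $E$ for the $\sigma_{2}$ equation is preserved. Lemma \ref{Eimpliesellipticity} (in its $\sigma_{2}$ version) then gives that ellipticity is preserved.

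The only point needing care is the first step: that a first failure time exists and that $S_{k,t_{0}}$ is merely semidefinite there. This follows from continuity of eigenvalues, because the set of $t$ where all $S_{k,t}$ are positive definite is open and contains $0$. The identity in the key step is the real content of the argument. Notably, it uses no information about the other $S_{l}$ with $l\neq k$.
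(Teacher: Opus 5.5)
Your proof is correct and follows the paper's own argument: a first failure time, a null vector of $\sum_{i\neq k}A_i$, and a contradiction with $\sigma_2$-positivity via a nonpositive sum of squares. The only difference is cosmetic. Your identity $\sum_{i<j}(A_iA_j+A_jA_i)=A_kS_k+S_kA_k+S_k^2-\sum_{i\neq k}A_i^2$ replaces the paper's elimination of $A_lv$, and the resulting $-\sum_{i\neq k}|A_ix|^2$ equals the paper's $-\|(\sum_{i\neq k,l}A_i)v\|^2-\sum_{i\neq k,l}\|A_iv\|^2$.
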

\begin{proof}
	Suppose $t_{0}$ is the first time where the condition is violated. Then we have $\sum_{i\ne k}A_{i,t_{0}}$ is not positive definite for some $k$. We will suppress the $t_{0}$ from hereon. By continuity it will become positive semi-definite. Hence there exists a non-zero vector $v$ such that 
	\[v^{*}(\sum_{i\ne k}A_{i})v=0.\]
	This gives (positive semi-definiteness)
	\[(\sum_{i\ne k}A_{i})v=0.\]
	Multiplying this equation from the left by $A_{k}$, we have 
	\[(\sum_{i\ne k}A_{k}A_{i})v=0.\] 
	Taking $()^{*}$, we get
	\[v^{*}(\sum_{i\ne k}A_{i}A_{k})=0.\]
	But since at  this point $\frac{(\sqrt{-1}F_{H_{t}})^{2}\wedge \omega^{n-2}}{\eta}$ is positive definite, we have 
	\begin{align*}
		& v^{*}(\sum_{i\ne j}(A_{i}A_{j}+A_{j}A_{i}))v>0\\
		&\implies  v^{*}(\sum_{i\ne j; i,j\ne k}(A_{i}A_{j}+A_{j}A_{i}))v>0.
	\end{align*} 
	In the last line we have used $(\sum_{i\ne k}A_{k}A_{i})v=0=v^{*}(\sum_{i\ne k}A_{i}A_{k})$. Now suppose $l=\min\{1,\dots,k-1,k+1,\dots,n \}$. Now the equation $(\sum_{i\ne k}A_{i})v=0$ can be written as $A_{l}v=-\sum_{i\ne k,l}A_{i}v$.
	Now we get
	\begin{align*}
		& v^{*}(\sum_{i\ne j; i,j\ne k}(A_{i}A_{j}+A_{j}A_{i}))v>0\\
		&\implies  v^{*}(\sum_{j\ne k,l}(A_{l}A_{j}+A_{j}A_{l}))v+ v^{*}(\sum_{i\ne j; i,j\ne k,l}(A_{i}A_{j}+A_{j}A_{i}))v>0\\
		&\implies  -2v^{*}(\sum_{i\ne k,l}A_{i})^{2}v+ v^{*}(\sum_{i\ne j; i,j\ne k,l}(A_{i}A_{j}+A_{j}A_{i}))v>0\\
		&\implies-v^{*}(\sum_{i\ne k,l}A_{i})^{2}v-v^{*}(\sum_{i\ne k,l}A_{i}^{2})v>0\\
		&\implies -\|(\sum_{i\ne k,l}A_{i})v\|^{2}-\sum_{i\ne k,l}\|A_{i}v\|^{2}>0
	\end{align*}
	This is a contradiction. Thus $\sum_{i\ne k}A_{i,t}$ is positive definite for each $k$ throughout the path. Hence condition $E$ is preserved.
\end{proof}
The above lemma proves the second part of therem \ref{thmsigma}.

		\section{Counter Example for the Vector Bundle J-equation}
		\label{section J-equation}
		The vector bundle $J$-equation was introduced by Takahashi (\cite{Takahashi2024}).  Suppose $E$ be a
		holomorphic vector bundle of rank $r$ over an $n$-dimensional compact Kähler manifold $(X,\omega)$
		satisfying
	\[	ch_{n}(E)>0,\ \ \  [\omega]\cdot ch_{n-1}(E) > 0.\]
		 We say that a Hermitian
		metric $H$  solves the $J$-equation if it satisfies
		\[c(\sqrt{-1}F_{H})^{n}-\omega\mathrm{Id}\wedge (\sqrt{-1}F_{H})^{n-1}=0,\]
		for some constant $c$.  In fact 
		\[c=\frac{[\omega]\cdot ch_{n-1}(E)}{n ch_{n}(E)}>0.\]
	Dividing by $c$ and considering $\frac{\omega}{c}$ a K\"ahler form, we will consider the following equation
	\[(\sqrt{-1}F_{H})^{n}-\omega\mathrm{Id}\wedge(\sqrt{-1}F_{H})^{n-1}=0\] as the vector bundle $J$-equation.\\	We will be working in dimension $3$ first. So at a point, the curvature takes the form
		\[\sqrt{-1}F_{H_{0}}=\sum_{i=1}^{3}A_{i}\sqrt{-1}dz^{i}\wedge d\bar{z}^{i}+\sum_{i<j}(B_{ij}\sqrt{-1}dz^{i}\wedge d\bar{z}^{j}+B_{ij}^{*}dz^{j}\wedge d\bar{z}^{i}),\]
		then we have(ignoring the volume form part)
		\begin{align*}
			&(\sqrt{-1}F_{H_{0}})^{3}-\omega\mathrm{Id}\wedge (\sqrt{-1}F_{H_{0}})^{2}\\
			&=\{A_{1},A_{2},A_{3}\}-\{A_{1},B_{23},B_{23}^{*}\}-\{A_{2},B_{13},B_{13}^{*}\}-\{A_{3},B_{12},B_{12}^{*}\}\\
			&+\{B_{13},B_{12}^{*},B_{23}^{*}\}+\{B_{13}^{*},B_{12},B_{23}\}-\{A_{1},A_{2}\}-\{A_{1},A_{3}\}-\{A_{2},A_{3}\}\\
			&+\{B_{12},B_{12}^{*}\}+\{B_{13},B_{13}^{*}\}+\{B_{23},B_{23}^{*}\},
		\end{align*}
		where $\{X,Y\}=XY+YX$.
The above is a routine computation.\\
		 We consider the following matrices 
		\[A_{1}=\begin{bmatrix}
			p(t) & 0 & 0\\
			0 & q(t) & 0\\
			0 & 0  & r(t)
		\end{bmatrix}, \ \ \  A_{2}=\begin{bmatrix}
		s(t) & 0 & 0\\
		0 & u(t) & 0\\
		0 & 0 & v(t)
		\end{bmatrix}, \ \ \ 
		A_{3}=\begin{bmatrix}
			x(t) & 0 & 0\\
			0 & y(t) & 0\\
			0 & 0 & z(t)
		\end{bmatrix};\] and  
		 	\[B_{12}=\begin{bmatrix}
		0 & a(t) & 0  \\
		0 & 0 & 0\\
		0 & 0 & 0
		\end{bmatrix}, \ \ \  B_{13}=\begin{bmatrix}
		0 & 0 & b(t)\\
		0 & 0 & 0\\
		0 & 0 & 0 
		\end{bmatrix}, \ \ \ 
		B_{23}=\begin{bmatrix}
		0 & 0 & 0\\
		0 & 0 & c(t)\\
		0 & 0 & 0
		\end{bmatrix}.\]
		 The matrices $\{A_{1},A_{2},A_{3}\},\{A_{1},B_{23},B_{23}^{*}\},\{A_{2},B_{13},B_{13}^{*}\},$
		$\{A_{3},B_{12},B_{12}^{*}\},\{B_{13}^{*},B_{12},B_{13}\},\\
		\{B_{13},
		B_{12}^{*},B_{23}^{*}\}$ are same as before in section \ref{section vbma}. The computations of $\{A_{i},A_{j}\}$(for $i\ne j$) and $\{B_{ij},B_{ij}^{*}\}$ are easy to see.\\		
	With this considerations, the vector bundle $J$-equation takes the form
	\begin{equation}
	\label{Jequation}
	\begin{cases}
		1)6psx-b^{2}(2s+v-1)-a^{2}(2x+y-1)+2abc-2(ps+sx+px)=0\\
		 2)6quy-c^{2}(2q+r-1)-a^{2}(x+2y-1)+2abc-2(qu+uy+qy)=0\\
		 3)6rvz-c^{2}(q+2r-1)-b^{2}(s+2v-1)+2abc-2(rv+vz+rz)=0.
	\end{cases}
		\end{equation}	

The condition $E$ for the vector bundle $J$-equation is that $3ps-a^{2}-p-s=0$. \\
Our initial point will be $p=q=r=s=u=v=2$, $x=y=z=\frac{1}{2}$, and $a=b=c=0$. We define $p,s,x,y,a:[0,1]\rightarrow \mathbb{R}$ by 
\begin{align*}
	& 1)p(t)=2+2t(1-t) \\
	& 2) s(t)=2-t\\
	& 3) x(t)=\frac{1}{2}\\
	& 4) y(t)=\frac{1}{2}\\
	& 5) a(t)=\sqrt{3}t. 
\end{align*}
 Observe that the condition $E$ for the vector bundle $J$-equation is not satisfied at $t=1$ and $p(0)=s(0)=2,x(0)=y(0)=\frac{1}{2},a(0)=0$. Then the first equation in (\ref{Jequation}) becomes 
\begin{align*}
	&6psx-b^{2}(2s+v-1)-a^{2}(2x+y-1)+2abc-2(ps+sx+px)=0\\
	& \implies x(6ps-2a^{2}-2s-2p)=b^{2}(2s+v-1)+a^{2}(y-1)+2ps-2abc\\
	&\implies 3ps-a^{2}-s-p=b^{2}(2s+v-1)+a^{2}y-a^{2}+2ps-2abc\\
	&\implies ps-p-s=b^{2}(2s+v-1)+a^{2}y-2abc.
\end{align*}
In the third line, we have used $x=\frac{1}{2}$. Now putting the functions $p,s,y,a$, we get
\begin{align*}
	&(2+2t-2t^{2})(2-t)-2-2t+2t^{2}-2+t=b^{2}(4-2t+v-1)+\frac{3}{2}t^{2}-2\sqrt{3}tbc\\
	&\implies (t-4t^{2}+2t^{3})=b^{2}(3-2t+v)+\frac{3}{2}t^{2}-2\sqrt{3}tbc
\end{align*}
Now we define $b(t)=\sqrt{\frac{t}{5}}$ and putting this in the above, we get
\begin{align*}
	&(t-4t^{2}+2t^{3})=\frac{t}{5}(3-2t+v)+\frac{3}{2}t^{2}-\sqrt{\frac{12}{5}}t\sqrt{t}c\\
	&\implies(1-4t+2t^{2})=\frac{3-2t+v}{5}+\frac{3}{2}t-\sqrt{\frac{12t}{5}}c.
\end{align*} 
Next, we define $c(t)=5.5\times\sqrt{\frac{5t}{12}}$ and putting this above, we get
\begin{align*}
	&1-4t+2t^{2}=\frac{3-2t+v}{5}+1.5t-5.5t\\
	&\implies v=2+2t+10t^{2}.
\end{align*}
So we define $v(t)$ by the above formula. Hence we have solved equation $(1)$ in (\ref{Jequation}) for all $t\in[0,1]$. Next we solve Equation $(2)$ and $(3)$ in (\ref{Jequation}).\\
For this, we define $q(t)=r(t)=2$. Now from second equation in (\ref{Jequation}) we get
\begin{align*}
	&6quy-c^{2}(2q+r-1)-a^{2}(x+2y-1)+2abc-2(qu+uy+qy)=0\\
	& \implies 6u-5c^{2}-\frac{a^{2}}{2}+2abc-2(1+2.5u)=0\\
	&\implies u=2+5c^{2}+\frac{a^{2}}{2}-2abc.
\end{align*}
Now putting $a,b,c$ we get
\begin{align*}
	& u=2+5(5.5)^{2}\times \frac{5t}{12}+\frac{3t^{2}}{2}-2\times (5.5)\sqrt{\frac{3\times 5}{5\times 12}}t^{2}\\
	& \implies u=2+\frac{25\times (5.5)^{2}}{12}t-4t^{2}.
\end{align*}
We define $u(t)$ by the above formula. Now equation$(2)$ in (\ref{Jequation}) is satisfied. \\
Next we consider equation $(3)$ in (\ref{Jequation}).
\begin{align*}
	&6rvz-c^{2}(q+2r-1)-b^{2}(s+2v-1)+2abc-2(rv+vz+rz)=0\\
	&\implies 12v\times z-5c^{2}-b^{2}(2-t+4+4t+20t^{2}-1)+5.5t^{2}-4v-2z(r+v)=0\\
	&\implies z=\frac{8+(9+\frac{25\times (5.5)^{2}}{12})t+(\frac{3}{5}+34.5)t^{2}+4t^{3}}{16+20t+100t^{2}}.
\end{align*}
 If we define $z$ as above then the third equation in (\ref{Jequation}) is satisfied.\\
Now let us show that the condition $E$ for the vector bundle $J$-equation does not hold at $t=1$. Indeed, we consider the following test form $\xi=dz^{3}g=dz^{3}\begin{bmatrix}
	1 & 0 & 0\\
	0 & 0 & 0\\
	0 & 0 & 0
\end{bmatrix}$. Now
\begin{align*}
	&\mathrm{Tr}\left(\sum_{j=0}^{2}\sqrt{-1}\xi\wedge(\sqrt{-1}F_{H})^{j}\wedge \xi^{\dagger}\wedge (\sqrt{-1}F_{H})^{2-j}\right)-\mathrm{Tr}\left(\sum_{j=0}^{1}\sqrt{-1}\xi\wedge(\sqrt{-1}F_{H})^{j}\wedge\xi^{\dagger}\wedge(\sqrt{-1}F_{H})^{1-j}\right)\wedge \omega\\
	&=\mathrm{Tr}\left((gg^{*}+g^{*}g)(\{A_{1},A_{2}\}-\{B_{12},B_{12}^{*}\})+(gA_{1}g^{*}A_{2}+gA_{2}g^{*}A_{1}-gB_{12}g^{*}B_{12}^{*}-gB_{12}^{*}g^{*}B_{12})\right)\\
	&-\mathrm{Tr}\left((gg^{*}+g^{*}g)(A_{1}+A_{2})\right)\\
	&=2(2ps-a^{2}-p-s)+ps+ps-0-0\\
	&=2(3ps-a^{2}-p-s)
\end{align*}
Now when $a=b=c=0$, all the equations become of the form
\[6rvz-2(rv+vz+rz)=0.\]
Now if we divide by $2rvz$(we are considering $r,v,,z>0$) then we get
\[3=\frac{1}{r}+\frac{1}{v}+\frac{1}{z}.\]
We take $\frac{1}{r}=\frac{1}{1+t}, \frac{1}{v}=\frac{1}{1+t}$. Then $\frac{1}{z}=3-\frac{2}{1+t}=\frac{1+3t}{1+t}$ and hence $z(t)=\frac{1+t}{1+3t}$. If we define $p=q=s=u=r=v$, then $x=y=z$ and they satisfy the equations for $a=b=c=0$. This is a path solving the equations and from the point $p=q=r=s=u=v=x=y=x=1$ to $p=q=r=s=u=v=2; x=y=z=\frac{1}{2}$. Concatenating these two paths, we get our desired path. Also one can show condition $E$ for the vector bundle $J$-equation holds at $p=q=r=s=u=v=2;x=y=z=\frac{1}{2}$. This is done in section \ref{section dhym}.
\begin{thm}[$=$Theorem \ref{thmj}]
	The ellipticity is not preserved for the vector bundle $J$-equation at a point along continuity paths in the connected component of trivial solution when dimension $\ge 3$ and rank $\ge 3$.
\end{thm}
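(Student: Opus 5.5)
The plan follows the same strategy as the proof of Theorem \ref{thmvbma}. First I settle dimension $3$ and rank $3$ using the explicit path already built. I then extend to rank $\ge 3$ by adding a block, and to dimension $\ge 3$ by adding trivial directions. Finally, Lemmas \ref{Eimpliesellipticity} and \ref{ellipticityimpliesE}, in their $J$-equation versions, turn failure of condition $E$ at the endpoint into failure of preserved ellipticity.

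\textbf{Dimension and rank three.} The path is the concatenation of two pieces. The first is the diagonal path $p=q=r=s=u=v=1+t$, $x=y=z=\frac{1+t}{1+3t}$ with $a=b=c=0$. It joins the trivial solution, where all $A_i=\mathrm{Id}$ (this is the trivial solution once $\omega$ is rescaled so that $\rho=1$ solves $3=\frac1r+\frac1v+\frac1z$), to the point $p=\dots=v=2$, $x=y=z=\frac12$. The second piece is the explicit path defined above.

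Along the whole concatenation I check three things.
\begin{itemize}
\item All the functions are continuous. This includes $\sqrt{t}$ at $t=0$. In particular $c(t)=5.5\sqrt{5t/12}$ and $b(t)=\sqrt{t/5}$, while $a(t)=\sqrt3 t$, so $a,b,c$ vanish at $t=0$ and the second piece starts exactly at $(2,\dots,2,\frac12,\frac12,\frac12)$.
\item The system (\ref{Jequation}) holds by construction.
\item The diagonal entries stay positive. This needs $z>0$, which is clear because every coefficient of $z$ is positive, and $u>0$, which holds since $u\ge 2-4t^2+\frac{25(5.5)^2}{12}t>0$ on $[0,1]$.
\end{itemize}

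For condition $E$ at the endpoint $t=1$, the test form $\xi=dz^3E_{11}$ gives the quantity $2(3ps-a^2-p-s)$, computed above. At $t=1$ we have $p=2$, $s=1$ and $a^2=3$, so this equals $2(6-3-3)=0$ and strict positivity fails.

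Condition $E$ at the starting point follows from the trivial solution lemma adapted to the $J$-equation. Concretely, with $F=\rho\,\mathrm{Id}\,\omega$ the $J$-condition-$E$ expression becomes a positive multiple of $\mathrm{Tr}(gg^*)$: after the rescaling of $\omega$, both the $(\sqrt{-1}F)^{n-1}$ and the $\omega\wedge(\sqrt{-1}F)^{n-2}$ terms are multiples of $\omega^{n-1}$, and I check that the net coefficient is positive.

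\textbf{Higher rank and dimension.} For rank $\ge 3$, I extend each $A_i$ block-diagonally by $\lambda\mathrm{Id}$ and each $B_{ij}$ by $0$. Here $\lambda$ solves the scalar equation $3\lambda^3\cdot 2-\dots=0$, i.e.\ $\lambda$ is the trivial-solution value $\frac1r+\frac1v+\frac1z=3$ with all three equal, which gives $\lambda=1$. Since the blocks decouple, the equation holds blockwise, and condition $E$ with $g=E_{11}$ only sees the first block.

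For dimension $n\ge3$, I add $\mu\,\mathrm{Id}\sqrt{-1}dz^{l}\wedge d\bar z^{l}$ for $l\ge 4$ and write $\omega=\omega_3+\omega'$. Expanding $(\sqrt{-1}F)^n-\omega\wedge(\sqrt{-1}F)^{n-1}$, the result is no longer a single multiple of the $3$-dimensional $J$-operator. Mixed terms such as $\mathcal A^2\wedge\omega_3\wedge\mu^{n-3}$ appear, and they are multiplied by coefficients of different sizes.

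This mismatch is the main obstacle. I would handle it as in Section \ref{sectionsigmak}, by a perturbation argument. I choose the extra directions so that, in the limit $\epsilon\to0$ of a parameter, the equation reduces to a positive multiple of the three-dimensional $J$-equation. For example, take $F$-eigenvalue $\mu=1/\epsilon$ along the extra directions and $\omega'$ matched so that the extra-direction contribution to $F^n-\omega\wedge F^{n-1}$ factors as $(\mu^{n-3}-\mu^{n-3})$ times lower-order terms. After normalizing, the equation is then the $3$-dimensional system plus $O(\epsilon)$ corrections.

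Next I keep $p,s,a$ fixed so that condition $E$ (which becomes $3ps-a^2-p-s+O(\epsilon)$) still vanishes at $t=1$, absorbing the $O(\epsilon)$ into a redefined $a(t)$. I then re-solve for $v,u,z$ via the implicit function theorem in $(\epsilon,v,u,z)$. The Jacobian at $\epsilon=0$ is triangular with nonzero diagonal entries $\partial/\partial v$, $\partial/\partial u$ and $\partial/\partial z$, each a linear coefficient that is positive along the path. Taking $\epsilon$ small uniformly in $t\in[0,1]$ preserves positivity, and the endpoint argument above then concludes.
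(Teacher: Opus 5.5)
The dimension-three, rank-three part of your argument agrees with the paper: the concatenated path, the value $2(3ps-a^2-p-s)=0$ at $t=1$, and condition $E$ at the trivial solution. So does your rank extension by an $\mathrm{Id}$ block. The gap is in the extension to $\dim X\ge 4$. The obstacle you identify there does not exist, and the perturbative argument you put in its place does not work as written.

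Give the new directions $F$-eigenvalue $1$ and $\omega$-eigenvalue $1$. That is, take $\sqrt{-1}F_H=\mathcal A+\Omega'$ and $\omega=\omega_3+\Omega'$, where $\Omega'=\sum_{l=1}^{k}\sqrt{-1}dz^{l+3}\wedge d\bar z^{l+3}$ is taken times $\mathrm{Id}$ and so is central. Since $\mathcal A^4=0$, $\omega_3\wedge\mathcal A^3=0$ and $\Omega'^{k+1}=0$, Pascal's identity gives
\[
(\sqrt{-1}F_H)^{k+3}-\omega\wedge(\sqrt{-1}F_H)^{k+2}=\Big[\tbinom{k+3}{3}-\tbinom{k+2}{3}\Big]\mathcal A^3\wedge\Omega'^k-\tbinom{k+2}{2}\,\omega_3\wedge\mathcal A^2\wedge\Omega'^k=\tbinom{k+2}{2}\big(\mathcal A^3-\omega_3\wedge\mathcal A^2\big)\wedge\Omega'^k .
\]
The mixed term therefore carries exactly the same coefficient as the net cubic term. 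The equation is a positive multiple of the three-dimensional system, with no $O(\epsilon)$ correction. The same bookkeeping turns condition $E$ for $\xi=dz^3E_{11}$ into exactly $(k+2)!\,(3ps-a^2-p-s)$; this is the paper's proof.

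Your choice $\mu=1/\epsilon$ with $\omega'$ ``matched'' ($\omega'=\mu\Omega'$) is this same configuration in rescaled coordinates, so its corrections vanish identically. If $\omega'$ is not matched, the $\epsilon\to0$ limit is $\binom{n}{3}\mathcal A^3=\binom{n-1}{2}\omega_3\wedge\mathcal A^2$, which is not the three-dimensional equation.

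Had the corrections been genuinely nonzero, your implicit function step would fail. The Jacobian in $(v,u,z)$ is lower triangular, but its $(1,1)$ entry is $\partial_v$ of the first equation of (\ref{Jequation}), namely $-b^2=-t/5$. This entry is negative rather than positive, and it vanishes at $t=0$. Worse, on the entire diagonal first leg of your path, where $a=b=c=0$, the first equation reduces to $6psx-2(ps+sx+px)$ and involves none of $v,u,z$.

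So $\epsilon$ cannot be chosen uniformly along the path, and the part of the path near the trivial solution is not controlled. A perturbative route would need the device the paper uses for dHYM: restrict to $t\in[\delta,1]$ and use openness of ellipticity at $p=\dots=v=2$, $x=y=z=\frac12$. The exact reduction above makes this unnecessary.
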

\begin{proof}
	The previous example proves the theorem when dimension $=3$ and rank $=3$. First we extend to the case dimension $=3$ and rank $\ge 3$. This is done in the following way. We define 
	\[A_{i}'=\begin{bmatrix}
		A_{i} & 0\\
		0 & \mathrm{Id}
	\end{bmatrix}, B_{ij}'=\begin{bmatrix}
	 B_{ij} & 0\\
	 0 & 0
	\end{bmatrix},\]
	where the matrices are written in a block diagonal way. Now we will extend this to higher dimensions.  Hence we write
\begin{align*}
	&\sqrt{-1}F_{H}\\
	&=\sum_{i=1}^{3}A_{i}'\sqrt{-1}dz^{i}\wedge d\bar{z}^{i}+\sum_{1\le i<j\le3} B_{ij}'\sqrt{-1}dz^{i}\wedge d\bar{z}^{j}+\sum_{1\le i<j\le3} B_{ij}'^{*}\sqrt{-1}dz^{j}\wedge d\bar{z}^{i}+\sum_{l=1}^{k}\mathrm{Id}\sqrt{-1}dz^{l+3}\wedge d\bar{z}^{l+3}\\
	&=\mathcal{A}+\sum_{l=1}^{k}\mathrm{Id}\sqrt{-1}dz^{l+3}\wedge d\bar{z}^{l+3}
\end{align*}
and 
\[\omega=\sum_{i=1}^{k+3}\sqrt{-1}dz^{i}\wedge d\bar{z}^{i}.\]
Now we calculate 
\begin{align*}
	&(\sqrt{-1}F_{H})^{k+3}-\omega\wedge (\sqrt{-1}F_{H})^{k+2}\\
	&=\binom{k+3}{3}\mathcal{A}^{3}\wedge(\sum_{l=1}^{k}\mathrm{Id}\sqrt{-1}dz^{l+3}\wedge d\bar{z}^{l+3})^{k}\\
	&-\omega\wedge \left(\binom{k+2}{2}\mathcal{A}^{2}(\sum_{l=1}^{k}\mathrm{Id}\sqrt{-1}dz^{l+3}\wedge d\bar{z}^{l+3})^{k}+\binom{k+2}{3}\mathcal{A}^{3}\wedge (\sum_{l=1}^{k}\mathrm{Id}\sqrt{-1}dz^{l+3}\wedge d\bar{z}^{l+3})^{k-1}\right)\\
	&=\binom{k+3}{3}\mathcal{A}^{3}\wedge(\sum_{l=1}^{k}\mathrm{Id}\sqrt{-1}dz^{l+3}\wedge d\bar{z}^{l+3})^{k}-\binom{k+2}{3}\mathcal{A}^{3}\wedge (\sum_{l=1}^{k}\mathrm{Id}\sqrt{-1}dz^{l+3}\wedge d\bar{z}^{l+3})^{k}\\
&-\binom{k+2}{2}\mathcal{A}^{2}\wedge(\sum_{i=1}^{3}\sqrt{-1}dz^{i}\wedge d\bar{z}^{i})\wedge(\sum_{l=1}^{k}\mathrm{Id}\sqrt{-1}dz^{l+3}\wedge d\bar{z}^{l+3})^{k}\\
&=\binom{k+2}{2}\mathcal{A}^{3}\wedge (\sum_{l=1}^{k}\mathrm{Id}\sqrt{-1}dz^{l+3}\wedge d\bar{z}^{l+3})^{k}\\
&-\binom{k+2}{2}\mathcal{A}^{2}\wedge(\sum_{i=1}^{3}\sqrt{-1}dz^{i}\wedge d\bar{z}^{i})\wedge(\sum_{l=1}^{k}\mathrm{Id}\sqrt{-1}dz^{l+3}\wedge d\bar{z}^{l+3})^{k}\\
&=\binom{k+2}{2}(\sum_{l=1}^{k}\mathrm{Id}\sqrt{-1}dz^{l+3}\wedge d\bar{z}^{l+3})^{k}\left(\mathcal{A}^{3}-\mathcal{A}^{2}\wedge (\sum_{i=1}^{3}\sqrt{-1}dz^{i}\wedge d\bar{z}^{i}) \right)\\
&=0.
\end{align*}
Now we only need to check the condition $E$ for the vector bundle $J$-equation. For this we take $\xi=dz^{3}E_{11}$. So the condition $E$ for the vector bundle $J$-equation is the following.
\begin{align*}
	&\mathrm{Tr}\left(\sum_{j=0}^{k+2}\sqrt{-1}\xi\wedge(\sqrt{-1}F_{H})^{j}\wedge \xi^{*}\wedge (\sqrt{-1}F_{H})^{k+2-j}\right)\\
	&-\mathrm{Tr}\left(\sum_{j=0}^{k+1}\sqrt{-1}\xi\wedge(\sqrt{-1}F_{H})^{j}\wedge \xi^{*}\wedge (\sqrt{-1}F_{H})^{k+1-j}\right)\wedge \omega\\
	&=\frac{(k+3)!}{3}(3ps-a^{2})-\mathrm{Tr}\left(\sum_{j=0}^{k+1}\sqrt{-1}\xi\wedge(\sqrt{-1}F_{H})^{j}\wedge \xi^{*}\wedge (\sqrt{-1}F_{H})^{k+1-j}\right)\wedge \omega.
\end{align*}
The first term is calculated in the previous section \ref{section vbma} (we have ignored the volume form part). So we will only calculate the second term. The second term becomes:
\begin{align*}
	&\mathrm{Tr}\left(\sum_{j=0}^{k+1}\sqrt{-1}\xi\wedge(\sqrt{-1}F_{H})^{j}\wedge \xi^{*}\wedge (\sqrt{-1}F_{H})^{k+1-j}\right)\wedge \omega\\
	&=\mathrm{Tr}\left(\sum_{j=0}^{k+1}E_{11}\wedge(\sqrt{-1}F_{H})^{j}\wedge E_{11}\wedge (\sqrt{-1}F_{H})^{k+1-j}\right)\wedge \sqrt{-1}dz^{3}\wedge d\bar{z}^{3} \wedge\omega.
\end{align*}
This can be seen as the coefficient of $m$ in 
\[\mathrm{Tr}\left(E_{11}\big(\sqrt{-1}F_{H}+mE_{11}\sqrt{-1}dz^{3}\wedge d\bar{z}^{3}\big)^{k+2}\right)\wedge\omega.\]
So let us compute the coefficient of $m$. We compute 
\begin{align*}
	&\mathrm{Tr}\left(E_{11}\big(\mathcal{A}+mE_{11}\sqrt{-1}dz^{3}\wedge d\bar{z}^{3}+\sum_{l=1}^{k}\mathrm{Id}\sqrt{-1}dz^{l+3}\wedge d\bar{z}^{l+3}\big)^{k+2}\right)\wedge\omega\\
	&=\binom{k+2}{2}\mathrm{Tr}\left(E_{11}(\mathcal{A}+mE_{11}\sqrt{-1}dz^{3}\wedge d\bar{z}^{3})^{2}\big(\sum_{l=1}^{k}\mathrm{Id}\sqrt{-1}dz^{l+3}\wedge d\bar{z}^{l+3}\big)^{k}\right)\wedge\omega\\
	&+\binom{k+2}{3}\mathrm{Tr}\left(E_{11}(\mathcal{A}+mE_{11}\sqrt{-1}dz^{3}\wedge d\bar{z}^{3})^{3}\big(\sum_{l=1}^{k}\mathrm{Id}\sqrt{-1}dz^{l+3}\wedge d\bar{z}^{l+3}\big)^{k-1}\right)\wedge\omega\\
	&=\binom{k+2}{2}\mathrm{Tr}\left(E_{11}\mathcal{A}^{2}\big(\sum_{l=1}^{k}\mathrm{Id}\sqrt{-1}dz^{l+3}\wedge d\bar{z}^{l+3}\big)^{k}\right)\wedge\omega\\
	&+\binom{k+2}{3}\mathrm{Tr}\left(E_{11}\mathcal{A}^{3}\big(\sum_{l=1}^{k}\mathrm{Id}\sqrt{-1}dz^{l+3}\wedge d\bar{z}^{l+3}\big)^{k-1}\right)\wedge\omega\\
	&+m\times k!\binom{k+2}{2}\mathrm{Tr}\left(2E_{11}(A_{1}+A_{2})\right)\\
	&+m\times k!\binom{k+2}{3}\mathrm{Tr}\left(2E_{11}\big(\{A_{1},A_{2}\}-\{B_{12},B_{12}^{*}\}\big)+E_{11}A_{1}E_{11}A_{2}+E_{11}A_{2}E_{11}A_{1}\right)\\
		&=\binom{k+2}{2}\mathrm{Tr}\left(E_{11}\mathcal{A}^{2}\big(\sum_{l=1}^{k}\mathrm{Id}\sqrt{-1}dz^{l+3}\wedge d\bar{z}^{l+3}\big)^{k}\right)\wedge\omega\\
		&+\binom{k+2}{3}\mathrm{Tr}\left(E_{11}\mathcal{A}^{3}\big(\sum_{l=1}^{k}\mathrm{Id}\sqrt{-1}dz^{l+3}\wedge d\bar{z}^{l+3}\big)^{k-1}\right)\wedge\omega\\
	&+m\left((k+2)!(p+s)+\frac{k\times(k+2)!}{3}(3ps-a^{2})\right).
\end{align*}
We have ignored the volume form part from the coefficient of $m$ in the last two steps. Hence the condition $E$ for the vector bundle $J$-equation is 
\[\frac{(k+3)!}{3}(3ps-a^{2})-(k+2)!(p+s)-\frac{k\times(k+2)!}{3}(3ps-a^{2})=(k+2)!(3ps-a^{2}-p-s).\]
We are done.

\end{proof}		
		\section{Counter Example for the dHYM Equation}
		\label{section dhym}
		The deformed Hermitian-Yang-Mills equation is not yet correctly formulated in mathematics for higher rank vector bundles. However we will work with the equation suggested by  Collins-Yau \cite{collins2018} and call it the deformed Hermitian-Yang-Mills (dHYM) equation. For a metric $H$ on a holomorphic vector bundle over a compact K\"ahler manifold $(X,\omega)$, the dHYM is 
		\[\mathrm{Im}\left(e^{-\sqrt{-1}\hat{\theta}}\big(\omega\otimes\mathrm{Id}_{E}-F_{H}\big)^{n}\right)=0\]
		with 
		\[\int_{X}\mathrm{Tr}\left(\omega\otimes\mathrm{Id}-F_{H}\right)^{n}\in \mathbb{R}_{>0}e^{\sqrt{-1}\hat{\theta}}\]
		and the imaginary part is defined using the metric $H$. \\
		Now we will take  $\hat{\theta}=-\epsilon\theta+\pi\frac{n}{2}$ and $\omega=\frac{\epsilon\theta}{n} \tilde{\omega}$, where $\epsilon,\theta>0$. We put this in the equation 
		\begin{align*}
			&\mathrm{Im}\left(e^{\sqrt{-1}\epsilon\theta-\sqrt{-1}\pi\frac{n}{2}}\big(\frac{\epsilon\theta}{n}\tilde{\omega}\otimes\mathrm{Id}_{E}-F_{H}\big)^{n}\right)=0\\
			& \implies \mathrm{Im}\left(\big(\cos\epsilon\theta+\sqrt{-1}\sin\epsilon\theta\big)e^{-\sqrt{-1}\pi\frac{n}{2}}\big((\sqrt{-1})^{n}(\sqrt{-1}F_{H})^{n}+\epsilon\theta\tilde{\omega}(\sqrt{-1})^{n-1}(\sqrt{-1}F_{H})^{n-1}+O(\epsilon^{2})\big)\right)\\
			&=0.
		\end{align*}
		For small $\epsilon$, we know that $\cos\epsilon\theta=1+O(\epsilon^{2})$ and $\sin\epsilon\theta=\epsilon\theta+O(\epsilon^{3})$. So from above we get
		\begin{align*}
			& \epsilon\theta\left((\sqrt{-1}F_{H})^{n}-\tilde{\omega}(\sqrt{-1}F_{H})^{n-1}\right)+O(\epsilon^{2})=0\\
			&\implies(\sqrt{-1}F_{H})^{n}-\tilde{\omega}(\sqrt{-1}F_{H})^{n-1}+O(\epsilon)=0. 
		\end{align*}
		So, in this setting we can think that the dHYM equation is just perturbation of the vector bundle $J$-equation. 
		Now we will use the counter example for the vector bundle $J$-equation and implicit function theorem to get counter example for the dHYM equation. \\
		We take 
		\[\sqrt{-1}F_{H}=\sum_{i=1}^{n}A_{i}\sqrt{-1}dz^{i}\wedge d\bar{z}^{i}+\sum_{i<j}B_{ij}\sqrt{-1}dz^{i}\wedge d\bar{z}^{j}+\sum_{i<j} B_{ij}^{*}\sqrt{-1}dz^{j}\wedge d\bar{z}^{i}\]
		and 
		\[\tilde{\omega}=\sum_{i=1}^{n}\sqrt{-1}dz^{i}\wedge d\bar{z}^{i}.\]
	So the equation becomes 
		\begin{align*}
			& 1) \alpha:=\binom{n-1}{2}(n-3)! \left(6psx-b^{2}(2s+v-1)-a^{2}(2x+y-1)+2abc-2(ps+sx+px)\right)+D_{11,\epsilon}=0\\
			& 2)\beta:= \binom{n-1}{2}(n-3)! \left(6quy-c^{2}(2q+r-1)-a^{2}(x+2y-1)+2abc-2(qu+uy+qy)\right)+D_{22,\epsilon}=0\\
			& 3)\gamma:= \binom{n-1}{2}(n-3)! \left(6rvz-c^{2}(q+2r-1)-b^{2}(s+2v-1)+2abc-2(rv+vz+rz)\right)+D_{33,\epsilon}=0,
		\end{align*}
		where $D_{11,\epsilon}$ is a function of $p,s,x,v,y,a,b,c,\epsilon$; $\mathcal{D}_{22,\epsilon}$ is a function of $q,u,y,x,r,a,b,c,\epsilon$; $\mathcal{D}_{33,\epsilon}$ is a function of $r,v,z,q,s,a,b,c,\epsilon$ and for $\epsilon=0$, $D_{ii,\epsilon=0}=0$.
		The condition $E$ for the dHYM equation becomes $(n-1)!(3ps-a^{2}-p-s)+a^{2}G(\epsilon)+H(p,s,\epsilon)$, where  $G(\epsilon),H(p,s,\epsilon)$ are functions such that $G(\epsilon=0)=0$ and $H(p,s,\epsilon=0)=0$. Now we will set up for the implicit function theorem. We define 
		\begin{align*}
			&p(t)=2+2t(1-t)\\
			&s(t)=2-t\\
			&a(t)=\sqrt{\frac{3(n-1)!+H(2,1,\epsilon)}{(n-1)!-G(\epsilon)}}t\\
			&y(t)=x(t)=\frac{1}{2} \\ &q(t)=r(t)=2\\
			&b(t)=\sqrt{\frac{t}{5}}\\
			&c(t)=5.5\times \sqrt{\frac{5t}{12}}
		\end{align*}
		Now we have three equations and three unknowns $v,u,z$. \\
		We define 
		\[T:\mathbb{R}\times\mathbb{R}^{3}\rightarrow \mathbb{R}^{3}\]
		by
		\[(\epsilon,v,u,z)=(\alpha,\beta,\gamma).\]
		We know that for $\epsilon=0$ we have a solution. To get solution for small $\epsilon>0$, we will use implicit function theorem. So we compute at $\epsilon=0$,
		\[	\begin{bmatrix}
			\frac{\partial\alpha}{\partial v}& \frac{\partial \alpha}{\partial u} &\frac{\partial \alpha}{\partial z}\\
			\frac{\partial\beta}{\partial v}& \frac{\partial \beta}{\partial u} &\frac{\partial \beta}{\partial z}\\
			\frac{\partial\gamma}{\partial v}& \frac{\partial \gamma}{\partial u} &\frac{\partial \gamma}{\partial z}	\end{bmatrix}=\binom{n-1}{2}(n-3)!\begin{bmatrix}
			-b^{2}  & 0 & 0\\
			0 &6qy-2(q+y)& 0\\
			6rz-2b^{2}-2(r+z)& 0 & 6rv-2(r+v) 
		\end{bmatrix}.\]
		The determinant is 
		\begin{align*}
			&-\left(\binom{n-1}{2}(n-3)!\right)^{3}b^{2}\big(6qy-2(q+y)\big)\big(6rv-2(r+v)\big)\\
			&=-\left(\binom{n-1}{2}(n-3)!\right)^{3}\frac{t(16+20t+100t^{2})}{5},
		\end{align*}
		where we used the fact that for $\epsilon=0$, $v=2+2t+10t^{2}$ is the solution. This shows that we can apply the implicit function theorem for $t\in(0,1]$. So we have solution for all $t\in(0,1]$. To complete the proof we show that at $t=0$  for $\epsilon=0$(that is the vector bundle $J$-equation), the equation is elliptic. Since ellipticity is an open condition, we will be done by taking the path for $t\in [\delta,1]$ for $\delta>0$ sufficietly small. We only need to do the calculation for dimension $=3$ and rank $=3$.  Indeed the condition $E$ for the vector bundle $J$-equation is
		\begin{align*}
			&\mathrm{Tr}\left(\sum_{i=0}^{2}\sqrt{-1}\xi(\sqrt{-1}F_{H})^{j}\wedge\xi^{*}\wedge(\sqrt{-1}F_{H})^{2-j}\right)-\mathrm{Tr}\left(\sum_{i=0}^{1}\sqrt{-1}\xi(\sqrt{-1}F_{H})^{j}\wedge\xi^{*}\wedge(\sqrt{-1}F_{H})^{1-j}\right)\wedge\tilde{\omega},
		\end{align*} 
		where $\xi=a\otimes g$, $0\ne a=ldz^{1}+mdz^{2}+ndz^{3}\in \Sigma^{1,0}(X_{p})$, $0\ne g\in End(E_{p})$. Now we calculate 
		\begin{align*}
			&\mathrm{Tr}\left(\sum_{i=0}^{2}\sqrt{-1}\xi(\sqrt{-1}F_{H})^{j}\wedge\xi^{*}\wedge(\sqrt{-1}F_{H})^{2-j}\right)-\mathrm{Tr}\left(\sum_{i=0}^{1}\sqrt{-1}\xi(\sqrt{-1}F_{H})^{j}\wedge\xi^{*}\wedge(\sqrt{-1}F_{H})^{1-j}\right)\wedge\tilde{\omega}\\
			&=\sqrt{-1}a\wedge\bar{a}\wedge\mathrm{Tr}\left(\sum_{i=0}^{2}g(\sqrt{-1}F_{H})^{j}\wedge g^{*}\wedge(\sqrt{-1}F_{H})^{2-j}\right)\\
			&-\sqrt{-1}a\wedge\bar{a}\wedge\mathrm{Tr}\left(\sum_{i=0}^{1}g(\sqrt{-1}F_{H})^{j}\wedge g^{*}\wedge(\sqrt{-1}F_{H})^{1-j}\right)\wedge\tilde{\omega}.
		\end{align*}
		The above term is positive if and only if the following is a positive definite matrix
		\begin{align*}
			&\lvert l\rvert^{2}\left(\{A_{2},A_{3}\}\otimes\mathrm{Id}+\mathrm{Id}\otimes\{A_{2},A_{3}\}+A_{2}\otimes A_{3}+A_{3}\otimes A_{2}-\mathrm{Id}\otimes A_{2}-A_{2}\otimes\mathrm{Id}-\mathrm{Id}\otimes A_{3}-A_{3}\otimes\mathrm{Id}\right)\\
			&+\lvert m\rvert^{2}\left(\{A_{1},A_{3}\}\otimes\mathrm{Id}+\mathrm{Id}\otimes\{A_{1},A_{3}\}+A_{1}\otimes A_{3}+A_{3}\otimes A_{1}-\mathrm{Id}\otimes A_{1}-A_{1}\otimes\mathrm{Id}-\mathrm{Id}\otimes A_{3}-A_{3}\otimes\mathrm{Id}\right)\\
			&+\lvert n\rvert^{2}\left(\{A_{2},A_{1}\}\otimes\mathrm{Id}+\mathrm{Id}\otimes\{A_{2},A_{1}\}+A_{2}\otimes A_{1}+A_{1}\otimes A_{2}-\mathrm{Id}\otimes A_{2}-A_{2}\otimes\mathrm{Id}-\mathrm{Id}\otimes A_{1}-A_{1}\otimes\mathrm{Id}\right),
		\end{align*}
		where $A_{1}=A_{2}=2\mathrm{Id}$ and $A_{3}=\frac{1}{2}\mathrm{Id}$. The above expression then reduces to
		\begin{align*}
			&\lvert l\rvert^{2}(\mathrm{Id}\otimes\mathrm{Id})+\lvert m\rvert^{2}\left(\mathrm{Id}\otimes\mathrm{Id}\right)+16\lvert n\rvert^{2}\left(\mathrm{Id}\otimes\mathrm{Id}\right)\\
			&=(\lvert l\rvert^{2}+\lvert m\rvert^{2}+16\lvert n\rvert^{2})\mathrm{Id}\otimes\mathrm{Id}.
		\end{align*}
		This is always positive definite. Hence we are done. This proves theorem \ref{thmdhym}.

		\bibliographystyle{plain} 
	\bibliography{paper}
	\textbf{Gao Chen}(chengao1@ustc.edu.cn) and  \textbf{Kartick Ghosh}(kghosh@ustc.edu.cn)\\
		Institute of Geometry and Physics, University of Science and Technology of China\\
		No.$96$, JinZhai Road, Baohe District, Hefei, Anhui, $230000$, P.R.China
	\end{document}